\documentclass[12pt]{amsart}
\usepackage{hyperref,graphicx,amssymb,amsthm,amsmath, tikz-cd, array, verbatim, indentfirst, enumitem,fullpage,cleveref}
\usepackage[normalem]{ulem}

\theoremstyle{definition}
\newtheorem{sthm}{Theorem}[section]
\newtheorem*{nthm}{Theorem}
\newtheorem{slem}[sthm]{Lemma}
\newtheorem{scor}[sthm]{Corollary}

\newtheorem{srmk}[sthm]{Remark}

\newtheorem{sdef}[sthm]{Definition}
\newtheorem{question}{Question}

\newtheorem{theorem}{Theorem}

\newcommand{\mb}[1]{\mathbb{#1}}

\newcommand{\on}[1]{\operatorname{#1}}

\newcommand{\im}{\on{im}}

\renewcommand{\phi}{\varphi}

\newcommand{\ip}[1]{\left\langle #1 \right\rangle}

\newcommand{\Frac}{\on{Frac}}

\newcommand{\mbn}{\mb{N}}
\newcommand{\mbz}{\mb{Z}}
\newcommand{\depth}{\on{depth}}

\newcommand{\mfq}{\mathfrak{q}}
\newcommand{\mfm}{\mathfrak{m}}
\newcommand{\mfn}{\mathfrak{n}}

\newcommand{\hsl}{\operatorname{HSL}}
\newcommand{\fte}{\operatorname{Fte}}

\usepackage[symbol*,perpage]{footmisc}

\newcommand{\mcp}{\mathcal{P}}
\newcommand{\mca}{\mathcal{A}}

\newcommand{\mbm}{\mathbf{m}}
\newcommand{\mbe}{\mathbf{e}}
\newcommand{\mbf}{\mathbf{f}}
\newcommand{\mba}{\mathbf{a}}
\newcommand{\fbp}[1]{\left[ #1 \right]}
\begin{document}
\title{Homological properties of pinched Veronese rings}
\author[Kyle Maddox]{Kyle Maddox}
\address{Department of Mathematics, University of Kansas, 405 Snow Hall, 1460 Jayhawk Bvld, Lawrence, KS 66045, USA}
\email{maddox@ku.edu}
\author[Vaibhav Pandey]{Vaibhav Pandey}
\address{Department of Mathematics, University of Utah, 155 S 1400 E, Salt Lake City, UT 84112, USA}
\email{pandey@math.utah.edu}


\begin{abstract}
\noindent Pinched Veronese rings are formed by removing an algebra generator from a Veronese subring of a polynomial ring. We study the homological properties of such rings, including the Cohen-Macaulay, Gorenstein, and complete intersection properties. Greco and Martino classified Cohen-Macaulayness of pinched Veronese rings by the maximum entry of the exponent vector of the pinched monomial; we re-prove their results with semigroup methods and correct an omission of a small class of examples of Cohen-Macaulay pinched Veronese rings.  When the underlying field is of prime characteristic, we show that pinched Veronese rings exhibit a variety of F-singularities, including F-regular, F-injective, and F-nilpotent. We also compute upper bounds on the Frobenius test exponents of pinched Veronese rings, a computational invariant which controls the Frobenius closure of all parameter ideals simultaneously.
\end{abstract}

\maketitle

\section{Introduction}

\renewcommand{\thetheorem}{\Alph{theorem}}
A pinched Veronese ring is formed by removing any one of the algebra generators of a Veronese subring of a polynomial ring over a field. Let $k$ be a field and $\mbm=(m_1,\ldots,m_n)$ be an exponent vector in $\mbn^n$ with $m_1+\cdots +m_n=d$, we denote by $\mcp_{n,d,\mbm}$ the pinched Veronese ring in $n$ variables formed by removing the monomial generator $x_1^{m_1}\cdots x_n^{m_n}$ from the degree $d$ Veronese subring of $k[x_1,\ldots,x_n]$. For a vector $\mathbf{m}=(m_1,\ldots,m_n) \in \mbn^n$, we let $\max(\mathbf{m})= m_j$ if $m_j \ge m_i$ for all $1 \le i \le n$. 

These affine semigroup rings are rarely normal and provide examples of rings for which certain homological properties are difficult to prove. For instance, consider the pinched Veronese ring: 
\[R = k[x^3,y^3,z^3,x^2y,xy^2,x^2z,xz^2,y^2z,yz^2] \] formed by removing the generator $xyz$ of the third Veronese subring of $k[x,y,z]$, i.e. $R=\mcp_{3,3,(1,1,1)}$. It was not known whether $R$ was Koszul; the question was raised by Sturmfels in 1993 and it was settled in the affirmative in 2009 by Caviglia in \cite{GC}, who used the techniques of Gr\"obner bases and Koszul filtrations (See also \cite{Conca}).

The pinched Veronese ring $\mcp_{2,4,(2,2)}=k[x^4, x^3y, xy^3, y^4]$ is well known to not be Cohen-Macaulay (see \cite{Macaulay}). Both of the examples above raise two important questions which we attempt to address in this paper -- first, which pinched Veronese rings are Cohen-Macaulay, if any? Furthermore, can we understand the singularity types of these rings? 

In Section $2$ we discuss the normalizations of pinched Veronese rings and develop combinatorial tools which compare the affine semigroups defining pinched Veronese rings to those defining the corresponding Veronese rings. These combinatorial results serve as the technical heart of the paper and help to illuminate the homological properties of pinched Veronese rings in the later sections. Section $2$ also includes a brief discussion of the prime characteristic singularity types which arise for pinched Veronese rings. In particular, \Cref{lem: nilpotent cokernel F-rational} outlines when certain subrings of $F$-rational rings are $F$-nilpotent. 

In Section $3$, we use local cohomology and the combinatorial tools from Section $2$ to study the Cohen-Macaulay and Gorenstein properties of pinched Veronese rings. Indeed, we show that the above homological properties of these rings are sensitive to the algebra generator that is pinched out.

\begin{theorem}\label{thm A}
The pinched Veronese ring $\mathcal{P}_{n,d,\mathbf{m}}$ is Cohen-Macaulay if and only if one of the following three conditions hold.
\begin{itemize}
    \item $\max(\mathbf{m}) = d$.
    \item $n=2$ and $\max(\mathbf{m})= d-1$.
    \item $n=3$, $d=2$, and $\max(\mathbf{m}) = 1$.
\end{itemize}
Further, when $\max(\mathbf{m})= d-1$, $\mathcal{P}_{2,d,\mathbf{m}}$ is a Gorenstein ring with $a$-invariant zero, and when $\max(\mathbf{m}) = 1$, $\mathcal{P}_{3,2,\mathbf{m}}$ is a complete intersection ring.
\end{theorem}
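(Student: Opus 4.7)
I would base the proof on the short exact sequence $0 \to R \to \bar R \to \bar R/R \to 0$, where $R = \mcp_{n,d,\mbm}$ and $\bar R = k[x_1, \ldots, x_n]^{(d)}$ is the $d$-th Veronese subring, which is the normalization of $R$. Since $\bar R$ is Cohen-Macaulay of dimension $n$, the long exact sequence of local cohomology collapses to isomorphisms $H^i_{\mfm}(R) \cong H^{i-1}_{\mfm}(\bar R/R)$ for $1 \leq i < n$, together with $H^0_{\mfm}(R)=0$. Hence $R$ is Cohen-Macaulay if and only if the module $M := \bar R/R$ has vanishing local cohomology in all degrees below $n-1$. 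Since $M$ is $\mbz^n$-graded, with $k$-basis indexed by the gap set $G = \bar S \setminus S$ between the two semigroups, the combinatorial tools of Section $2$ should give explicit control of its structure.

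For the forward direction, I would treat each case separately. When $\max(\mbm) = d$, so that $\mbm = d\mbe_j$ is a vertex of the Veronese simplex, the gap set $G$ is confined to a low-dimensional facet of $\bar S$; one can either exhibit a short system of parameters acting regularly on $M$, or, more slickly, identify $R$ with a known Cohen-Macaulay semigroup ring using the symmetry of Veronese. In the case $n = 2$, $\max(\mbm) = d-1$, the ring has Krull dimension $2$, so the only obstruction is $H^0_{\mfm}(\bar R/R)$, and one verifies directly that every gap monomial is multiplied into $R$ by a suitable power of a parameter. The Gorenstein property and the vanishing of the $a$-invariant should then follow from a symmetric description of $\omega_R$ inherited from $\omega_{\bar R}$. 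The remaining case $n=3$, $d=2$, $\max(\mbm)=1$ reduces up to a permutation of the variables to a single ring, which can be checked directly by exhibiting an explicit complete intersection presentation.

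The hard direction is the converse, where one must show that every other triple $(n,d,\mbm)$ yields a non-Cohen-Macaulay ring. My plan is to produce explicit $\mbz^n$-graded elements of $H^j_{\mfm}(\bar R/R)$ for some $j \leq n-2$ that are not killed, by a Hochster-style formula adapted to affine semigroup modules. The intuition is that whenever $\max(\mbm) \leq d-2$ (and we are outside the exceptional dimension-$3$ case), there are sufficiently many degree-$d$ monomials whose products are forced to pass through $x^{\mbm}$, producing a fat gap set from which one can extract a nontrivial \v Cech cocycle. Concretely, I would look for a semigroup element $\mba$ whose ``link'' in the relevant simplicial complex is disconnected in a controlled way, forcing non-vanishing cohomology in the $(-\mba)$-graded piece.

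The main obstacle is making this last step uniform across all excluded $(n,d,\mbm)$: the combinatorics of the gap set depends on how far $\max(\mbm)$ is from $d$ and on the support pattern of $\mbm$ (in particular, whether any $m_i$ is zero), so I expect the argument to split into sub-cases indexed by these features. This is presumably where the combinatorial machinery of Section $2$ is indispensable, letting us translate ``$x^{\mbm}$ is essential for generating $\mba \in \bar S$'' into concrete statements about local cohomology of $M$.
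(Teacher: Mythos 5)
Your overall framework (the sequence $0 \to R \to \bar R \to \bar R/R \to 0$ and the collapse of the long exact sequence to $H^i_{\mfm}(R)\cong H^{i-1}_{\mfm}(\bar R/R)$ for $i<n$) is exactly the paper's starting point, but there are genuine gaps in how you run it. First, your framing is wrong in the case $\max(\mbm)=d$: there the pinched ring is \emph{already normal} (the removed generator $x_1^d$ is not integral over $R$), so the Veronese is not the normalization and your module $M=\bar R/R$ is not the right object; the paper handles this case by proving normality of the semigroup and invoking Hochster's theorem, which is what your ``identify $R$ with a known CM ring'' gesture would have to become. Second, and more seriously, the converse direction does not need Hochster-type formulas, disconnected links, or \v Cech cocycles in degrees $j\le n-2$, and your ``fat gap set'' intuition is backwards: when $\max(\mbm)<d-1$ the gap set $\mca_{n,d}\setminus\mca_{n,d,\mbm}$ is the \emph{single point} $\{\mbm\}$, when $\max(\mbm)=d-1$ and $d>2$ it is the one-parameter family $\{(ds-1,1,0,\ldots,0)\}$, and when $d=2$, $\max(\mbm)=1$ it is the two-parameter family $\{(2s+1,2t+1,0,\ldots,0)\}$. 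These explicit computations (the paper's key combinatorial lemmas) give $\depth(C)=0,1,2$ respectively, hence $\depth(R)=1,2,3$, and comparing with $\dim R=n$ settles \emph{both} directions of the Cohen--Macaulay classification at once. Your plan leaves precisely this step (``making it uniform across all excluded $(n,d,\mbm)$'') open, which is the heart of the theorem.

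There is also an inversion in your $n=2$, $\max(\mbm)=d-1$ argument: if every gap monomial were ``multiplied into $R$ by a suitable power of a parameter,'' its class in $C=\bar R/R$ would be annihilated by a power of $\mfm$, giving $H^0_{\mfm}(C)\neq 0$ and hence $\depth(R)=1$, i.e.\ the opposite of what you want. The correct verification is that no nonzero class of $C$ is killed by powers of $\mfm$ --- concretely, $x_1^d$ acts as a nonzerodivisor on $C$ (multiplication by $x_1^d$ permutes the gap monomials $x_1^{ds-1}x_2$), so $\depth(C)\ge 1$ and $\depth(R)=2$. Finally, the Gorenstein and $a$-invariant claims are only gestured at; the paper proves them by an explicit finite computation, showing $P/(x^d,y^d)$ has one-dimensional socle generated by $x^{d-1}y^{d+1}$ and reading off the degree of the corresponding top local cohomology class, and your appeal to a ``symmetric description of $\omega_R$ inherited from $\omega_{\bar R}$'' would itself require the gap-set description you have not established. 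The complete intersection case $\mcp_{3,2,(1,1,0)}$ you handle as the paper does.
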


This theorem re-proves the result \cite[Theorem~B]{GM} of Greco and Martino by using purely semigroup techniques and corrects an omission of a class of Cohen-Macaulay rings. In their original proof, Greco-Martino calculated the Betti numbers of pinched Veronese rings by means of the reduced homology of squarefree divisor complexes. 

The Cohen-Macaulay property of affine semigroup rings has been an area of active research since the 1970s. In \cite{Hoc}, Hochster proved the following famous result.
\begin{nthm}[Hochster, \cite{Hoc}]\label{Hochster}
If $Q$ is a normal semigroup, then the affine semigroup ring $k[Q]$ is Cohen-Macaulay.
\end{nthm}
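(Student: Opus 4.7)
The plan is to show $H^i_{\mfm}(k[Q]) = 0$ for $i < d := \dim k[Q]$ directly, exploiting the fine $L$-grading on $k[Q]$ induced by embedding $Q$ into its group completion $L \cong \mbz^d$. Normality of $Q$ gives $Q = \sigma \cap L$, where $\sigma = \mb{R}_{\geq 0} Q$ is a pointed rational polyhedral cone of dimension $d$, and let $\mfm$ denote the ideal of $k[Q]$ generated by the monomials $\chi^q$ for $0 \neq q \in Q$. Cohen-Macaulayness of $k[Q]$ is equivalent to the vanishing of $H^i_\mfm(k[Q])$ in the range $i<d$, so I would reduce the problem entirely to this cohomology computation.

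The main tool I would use is the Ishida complex $I^\bullet$, whose $i$-th term is $\bigoplus_\tau k[Q_\tau]$, summed over faces $\tau \preceq \sigma$ of codimension $i$, with $Q_\tau$ the localization of $Q$ at $\tau$. Its cohomology computes $H^\bullet_\mfm(k[Q])$, and since the complex respects the $L$-grading, for each $u \in L$ the graded piece $(I^\bullet)_u$ is a finite complex of $k$-vector spaces. A direct identification then shows that $(I^\bullet)_u$ is, up to a degree shift, the reduced simplicial cochain complex of the subcomplex
\[
\Delta_u := \{\tau \preceq \sigma : u \in Q_\tau\}
\]
of the face poset of $\sigma$, which translates the algebraic vanishing problem into a topological one.

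The crux --- and the main obstacle --- is a topological analysis of $\Delta_u$. Using the normality equation $Q = \sigma \cap L$, I would argue that for each $u \in L$ the complex $\Delta_u$ is either empty, the full boundary complex of $\sigma$, or star-shaped (and hence contractible) via a visibility-type convexity argument on $\sigma$. The full-boundary case occurs precisely when $-u$ lies in the relative interior of $\sigma$, and contributes nonzero top cohomology, recovering the Danilov--Stanley description $\omega_{k[Q]} = \bigoplus_{u \in \on{int}(\sigma) \cap L} k\chi^u$ of the canonical module as a byproduct. In all other cases the reduced cohomology of $\Delta_u$ vanishes, yielding $H^i_\mfm(k[Q]) = 0$ for $i < d$ and thus Cohen-Macaulayness. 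The degree bookkeeping between the Ishida complex and the simplicial cochain complex of $\Delta_u$, together with the star-shapedness argument forced by the convexity of $\sigma$, is where the essential work lies.
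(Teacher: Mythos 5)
First, note that the paper offers no proof of this statement at all: it is quoted as Hochster's theorem with a citation to \cite{Hoc}, so there is no internal argument to compare against. Your route --- reduce to vanishing of $H^i_\mfm(k[Q])$ for $i<\dim k[Q]$, compute via the Ishida complex, and analyze the $L$-graded pieces using convexity of $\sigma=\mb{R}_{\ge 0}Q$ --- is the standard modern proof (essentially Bruns--Herzog, Theorem 6.3.5, or Ishida's original treatment), and it is a perfectly legitimate alternative to Hochster's original, more combinatorial argument; the Danilov--Stanley description of the canonical module does indeed fall out of the same computation.

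However, the topological dictionary, which you yourself identify as the crux, is stated incorrectly. With your definition $\Delta_u=\{\tau\preceq\sigma : u\in Q_\tau\}$, the top face $\sigma$ always belongs to $\Delta_u$, since localizing at $\sigma$ inverts every monomial and $Q_\sigma=L$; so $\Delta_u$ is never empty and the trichotomy ``empty / full boundary / star-shaped'' cannot be right as written. In fact the case $u\in Q$ is the one where $\Delta_u$ is the entire face lattice, and there the graded piece of the Ishida complex is the augmented cellular cochain complex of the cross-section polytope (a ball), hence exact. The degrees carrying top cohomology, namely those with $-u\in\on{relint}(\sigma)$, are precisely the degrees where \emph{no} proper face lies in $\Delta_u$ (for a facet $\tau$, normality gives $Q_\tau=L\cap(\sigma+\mb{R}\tau)$, a lattice half-space, and $-u$ interior means $u$ violates every facet inequality), so the graded piece is a lone $k$ in cohomological degree $d$ --- not a boundary sphere. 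The sphere picture you describe belongs to the complementary collection $\{\tau \text{ proper} : u\notin Q_\tau\}$. So you must either switch to that complementary complex, or keep $\Delta_u$ and prove the correct statement: whenever $\Delta_u$ contains a proper face of $\sigma$ (equivalently $-u\notin\on{relint}(\sigma)$), its proper part is acyclic (this is where the visibility/star-shapedness argument via $Q_\tau=L\cap(\sigma+\mb{R}\tau)$ goes) and the graded piece is exact. A minor related slip: the Ishida complex should be indexed by \emph{dimension} of the face, placing $k[Q]$ in cohomological degree $0$ and $k[L]$ in degree $d$, so that its $i$-th cohomology is $H^i_\mfm(k[Q])$; with your codimension indexing the complex runs backwards. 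The architecture of your proposal is sound, but as written the case analysis proves vanishing in the wrong degrees, so the bookkeeping needs to be redone before this counts as a proof.
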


In \cite{Trung-Hoa}, Trung and Hoa describe the Cohen-Macaulay property of an affine semigroup ring in terms of combinatorial and topological properties of the convex rational polyhedral cone spanned by the affine semigroup. 

In Section $4$, we show that when the characteristic of the underlying field is positive, pinched Veronese rings exhibit a variety of $F$-singularities. We show that a large class of pinched Veronese rings are $F$-nilpotent. Interestingly, a certain class of these rings are either $F$-nilpotent or $F$-injective depending on the characteristic of the field.

\begin{theorem}\label{thm B}
Let $k$ be a field of characteristic $p>0$. The $F$-singularity type of the pinched Veronese ring $\mcp_{n,d,\mathbf{m}}$ is as follows.
\begin{itemize}
    \item $\mcp_{n,d,\mathbf{m}}$ is $F$-regular for $\max(\mbm) = d$.
    \item When $d>2$, $\mcp_{n,d,\mathbf{m}}$ is $F$-nilpotent for $\max(\mbm) < d$. 
    \item $\mcp_{n,2,\mbm}$ is $F$-nilpotent if $p=2$ and $F$-injective if $p>2$. Further, $\mcp_{3,2,\mbm}$ is $F$-pure if $\max(\mbm)=1$ and $p>2$.
\end{itemize}
\end{theorem}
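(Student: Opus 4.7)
The plan is to proceed bullet by bullet, leveraging the combinatorial comparison of $P := \mcp_{n,d,\mbm}$ with its ambient Veronese ring $V := k[x_1, \ldots, x_n]^{(d)}$ developed in Section~2, together with \Cref{lem: nilpotent cokernel F-rational}. The Veronese $V$ is a normal affine semigroup ring and therefore a direct summand of a polynomial ring, hence $F$-regular in characteristic $p > 0$.

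For the first bullet, when $\max(\mbm) = d$ so that, after relabeling, the pinched monomial is $x_1^d$, I would verify that $P$ itself is a normal affine semigroup ring via a direct combinatorial analysis of its cone. This cone is a proper subcone of $\mathbb{R}^n_{\geq 0}$ cut out by additional inequalities; nevertheless, the semigroup of $P$ saturates this cone within the sublattice generated by the surviving monomial exponents. Once normality is established, $F$-regularity follows because normal affine semigroup rings in characteristic $p$ are direct summands of polynomial rings over the prime subfield.

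For the second bullet ($d > 2$, $\max(\mbm) < d$) and the $p = 2$ case of the third bullet, I would apply \Cref{lem: nilpotent cokernel F-rational} to the finite extension $P \subset V$. The lemma reduces $F$-nilpotency of $P$ to nilpotency of the Frobenius action on the cokernel $V/P$. The key observation is that $(x^\mbm)^N$ lies in $P$ for all $N \geq 2$ when $d > 2$, and for all even $N$ when $d = 2$: since $\max(\mbm) < d$, the vector $\mbm$ has at least two nonzero entries and admits a swap $\mbm + e_i - e_j$ yielding two non-pinched degree-$d$ generators summing to $2\mbm$, which produces $(x^\mbm)^2 \in P$; higher even powers follow, and for $d > 2$ a further decomposition handles odd powers (using $(x^\mbm)^d = \prod x_i^{d m_i} \in P$). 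Any monomial $x^\alpha \in V \setminus P$ factors in $V$ as $x^\mbm \cdot x^\beta$, and applying $F^e$ with $p^e \geq 2$ allows us to replace $(x^\mbm)^{p^e}$ by its factorization in $P$ to show $(x^\alpha)^{p^e} \in P$, yielding the required nilpotency on $V/P$.

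For $d = 2$ and $p > 2$, Frobenius on $V/P$ fails to be nilpotent: with $x^\mbm = x_i x_j$, the power $(x_i x_j)^p$ has both exponents odd and remains outside $P$. Instead, I would use the long exact sequence of local cohomology attached to $0 \to P \to V \to V/P \to 0$. A monomial analysis identifies $V \setminus P$ with $\{x_i^a x_j^b : a, b \text{ odd and positive}\}$, yielding the $P$-module isomorphism $V/P \cong k[x_i^2, x_j^2]$, a two-dimensional polynomial ring. Because $H^j_\mfm(V/P) = 0$ except at $j = 2$ and $H^j_\mfm(V) = 0$ except at $j = n$, the long exact sequence breaks into short exact sequences of Frobenius modules whose outer terms have injective Frobenius (both $V$ and the polynomial ring $V/P$ are $F$-regular), and a standard diagram chase yields $F$-injectivity of $P$. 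For the final claim, in the case $n = 3, d = 2, \max(\mbm) = 1, p > 2$, \Cref{thm A} asserts that $\mcp_{3,2,\mbm}$ is a complete intersection and hence Gorenstein; $F$-purity then follows from the standard fact that a Gorenstein $F$-injective ring is $F$-pure. The most subtle step is the long exact sequence analysis, which requires the explicit identification of $V/P$ as a polynomial ring and careful verification of $F$-compatibility throughout.
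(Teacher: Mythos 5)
Your overall route is the same as the paper's (reduce everything to the cokernel $C=V_{n,d}/\mcp_{n,d,\mbm}$ of the normalization, apply \Cref{lem: nilpotent cokernel F-rational} for $F$-nilpotence, use the long exact sequence for $F$-injectivity, and Gorenstein plus $F$-injective for $F$-purity), but the combinatorial step carrying the second bullet has a genuine gap. Your reduction is correct that it suffices to find $e$ with $(x^{\mbm})^{p^e}\in P$, since then $p^e\alpha\in\mca_{n,d,\mbm}$ for every $\alpha\in\mca_{n,d}$ and $\overline{F}^e=0$ on $C$. However, the only memberships you actually establish are $(x^{\mbm})^{2}\in P$ (via the swap) and $(x^{\mbm})^{d}\in P$, so your method only reaches exponents $N$ lying in the numerical semigroup generated by $2$ and $d$. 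When $d$ is even, every such $N$ is even, while for odd $p$ every $p^e$ is odd; thus, for example, for $\mcp_{2,4,(2,2)}$ or $\mcp_{2,4,(3,1)}$ with $p=3$ your argument produces no $e$ at all, and the claim ``$\mcp_{n,d,\mbm}$ is $F$-nilpotent for $d>2$, $\max(\mbm)<d$'' is left unproved in a large family of cases. The fact you need -- $N\mbm\in\mca_{n,d,\mbm}$ for every $N\ge 2$ when $d>2$ -- is true, but it requires a genuine perturbation argument: e.g. $3\mbm=(\mbm+2e_i-2e_j)+2(\mbm-e_i+e_j)$ when some $m_j\ge 2$ and $m_i\ge1$, or a cyclic triple of swaps when all nonzero entries of $\mbm$ equal $1$. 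The paper instead determines $\mca_{n,d}\setminus\mca_{n,d,\mbm}$ exactly (\Cref{lem: combinatorial cokernel max m < d-1 case}, \Cref{lem: combinatorial cokernel max m = d-1 case}, \Cref{cor: cokernel is principal}), after which $\overline{F}=0$ on $C$ already for $e=1$.

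A second, smaller issue is in the $d=2$, $p>2$ case. Your identification of $C$ with (a shift of) $k[x_i^2,x_j^2]$ generated by $x_ix_j$ is correct, but the induced Frobenius on $C$ is \emph{not} the ring Frobenius of that polynomial ring: on the generator it is $\overline{F}(r\cdot x_ix_j)=r^p\,(x_i^2x_j^2)^{(p-1)/2}\cdot x_ix_j$, i.e. the standard $p$-th power map composed with multiplication by $(x_ix_j)^{p-1}$. So injectivity of $\overline{F}$ on $H^2_{\mfm}(C)$ does not follow from ``the polynomial ring is $F$-regular''; multiplication by a nonzerodivisor is generally not injective on top local cohomology, so the twisted action must be checked by hand. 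The check does work -- the paper does it via the Nagel--Schenzel isomorphism and the direct-limit description of $H^2_{(x_i^2,x_j^2)}(C)$, where the twisted action sends distinct basis classes to distinct nonzero basis classes -- so this is a fixable imprecision rather than a wrong approach. The remaining ingredients (the short exact sequences and diagram chase for $F$-injectivity, Fedder's lemma together with the Gorenstein statement of \Cref{thm A} for $F$-purity, and normality giving $F$-regularity when $\max(\mbm)=d$) agree with the paper's proof.
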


We also compute upper bounds on the Hartshorne-Speiser-Lyubeznik numbers and the Frobenius test exponents of pinched Veronese rings.

In Section $5$, we prove similar results for affine semigroup rings obtained by removing larger subsets of the Veronese generators. In particular, we consider removing any subset of algebra generators $x_1^{m_1}\cdots x_n^{m_n}$ with $\max(\mbm)<d-1$ from the Veronese subring and show that this situation is remarkably similar to the $\max(\mbm)<d-1$ case for single pinches. Finally, we conclude the paper with several open questions.

\section{Preliminaries}

Throughout, fix a field $k$ and let $n>1$ and $d>1$ be natural numbers. We let $S$ denote the standard graded polynomial ring $k[x_1,\ldots,x_n]$. For a vector $\mathbf{m}=(m_1,\ldots,m_n) \in \mbn^n$, we let $|\mathbf{m}|=\sum_{i=1}^n m_i$ and $\max(\mathbf{m})= m_j$ if $m_j \ge m_i$ for all $1 \le i \le n$. We say $\mathbf{m}$ is in \textbf{descending order} if $m_1 \ge m_2 \ge \ldots \ge m_n$. 

For a semigroup $A \subset \mbn^n$, we have the \textbf{affine semigroup ring} $k[A]=k[x_1^{a_1},\ldots,x_n^{a_n} \mid (a_1,\ldots,a_n)\in A]$. One well-understood example is given by the semigroup $\mca_{n,d}\subset \mbn^n$ generated by the set $\{ \mathbf{a} \in \mbn^n \mid |\mathbf{a}|=d\}$ and the corresponding affine semigroup ring $V_{n,d}=k[\mca_{n,d}]$, also known as the $d$th Veronese subring of the polynomial ring $S$. Since $V_{n,d}$ is a direct summand of $S$, many important ring theoretic properties descend from $k[x_1,\ldots,x_n]$ to $V_{n,d}$. These include the Cohen-Macaulay property, unique factorization property, and $F$-regularity in positive characteristic among others.

Now fix any $\mathbf{m} \in \mca_{n,d}$ with $|\mathbf{m}|=d$. Our primary object of interest throughout this paper is the semigroup $\mca_{n,d,\mathbf{m}}$ generated by $\{\mathbf{a} \in \mbn^n \mid |\mathbf{a}|=d \text{ and } \mathbf{a} \neq \mathbf{m}\}$ and the corresponding affine semigroup ring $\mcp_{n,d,\mathbf{m}} = k[\mca_{n,d,\mathbf{m}}]$, also known as the \textbf{pinched Veronese ring}.

\begin{slem}\label{lem:fraction field of pinched veroneses}
The field of fractions of $\mcp_{2,2,(1,1)}$ is $k(x^2, y^2)$. Otherwise, the field of fractions of $\mcp_{n,d,\mathbf{m}}$ is the same as that of $V_{n,d}$: \[ \text{Frac}(\mcp_{n,d,\mathbf{m}}) = \text{Frac}(V_{n,d}) =  k(x_2/x_n,x_3/x_n, \ldots, x_{n-1}/x_n, x_n^d). \]
\end{slem}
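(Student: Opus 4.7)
The exceptional case is immediate: $\mcp_{2,2,(1,1)}=k[x^2,y^2]$ is a polynomial ring in two algebraically independent generators, so its fraction field is $k(x^2,y^2)$. The $\mbz$-lattice spanned by $(2,0)$ and $(0,2)$ is $2\mbz\oplus 2\mbz$, which does not contain $(1,1)$, so $xy\notin k(x^2,y^2)$; this is precisely why the case must genuinely be excluded from the second assertion.

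For the second statement, the inclusion $\on{Frac}(\mcp_{n,d,\mathbf{m}})\subseteq\on{Frac}(V_{n,d})$ is clear from $\mcp_{n,d,\mathbf{m}}\subseteq V_{n,d}$, and the displayed description of $\on{Frac}(V_{n,d})$ follows on writing each degree-$d$ generator $x^{\mathbf{a}}$ as $x_n^d\cdot\prod_{i<n}(x_i/x_n)^{a_i}$. Since $V_{n,d}$ is generated over $\mcp_{n,d,\mathbf{m}}$ by the single element $x^{\mathbf{m}}$, the reverse containment reduces to showing $x^{\mathbf{m}}\in\on{Frac}(\mcp_{n,d,\mathbf{m}})$, which in turn amounts to exhibiting $\mathbf{a},\mathbf{b}\in\mca_{n,d,\mathbf{m}}$ with $\mathbf{a}-\mathbf{b}=\mathbf{m}$ in $\mbz^n$, so that $x^{\mathbf{m}}=x^{\mathbf{a}}/x^{\mathbf{b}}$.

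My plan is a case split on $\max(\mathbf{m})$. If $\max(\mathbf{m})=d$, write $\mathbf{m}=d\mathbf{e}_i$, pick any $j\neq i$, and use the decomposition $d\mathbf{e}_i+d\mathbf{e}_j=\bigl((d-1)\mathbf{e}_i+\mathbf{e}_j\bigr)+\bigl(\mathbf{e}_i+(d-1)\mathbf{e}_j\bigr)$: both summands are degree-$d$ monomials distinct from $\mathbf{m}$, so $\mathbf{a}:=d\mathbf{e}_i+d\mathbf{e}_j$ and $\mathbf{b}:=d\mathbf{e}_j$ both lie in $\mca_{n,d,\mathbf{m}}$ with $\mathbf{a}-\mathbf{b}=\mathbf{m}$. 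If $\max(\mathbf{m})<d$, every pure power $d\mathbf{e}_i$ lies in $\mca_{n,d,\mathbf{m}}$, and the identity $\mathbf{m}=d\mathbf{e}_n+\sum_{j<n}m_j(\mathbf{e}_j-\mathbf{e}_n)$ reduces the task to realizing each $\mathbf{e}_j-\mathbf{e}_n$ as a difference of two elements of $\mca_{n,d}\setminus\{\mathbf{m}\}$. The first-pass pair $(d-1)\mathbf{e}_n+\mathbf{e}_j$ and $d\mathbf{e}_n$ works unless $\mathbf{m}$ itself equals $(d-1)\mathbf{e}_n+\mathbf{e}_j$; in that corner sub-case, when $n\ge 3$ I bring in a third index $k\neq j,n$ and use $(d-2)\mathbf{e}_n+\mathbf{e}_j+\mathbf{e}_k$ against $(d-1)\mathbf{e}_n+\mathbf{e}_k$, and when $n=2$ (so $d\ge 3$, as the exceptional case has been excluded) I use adjacent monomials $(k+1,d-k-1)$ and $(k,d-k)$ for any $k\in\{0,\dots,d-1\}\setminus\{m_1-1,m_1\}$, an index which exists precisely when $d\ge 3$.

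The main obstacle is verifying that the substitute pairs in the corner sub-cases really do avoid $\mathbf{m}$; this is what forces either a spare coordinate ($n\ge 3$) or a long enough list of degree-$d$ monomials in two variables ($d\ge 3$). The simultaneous failure of both conditions is exactly $(n,d,\mathbf{m})=(2,2,(1,1))$, matching the lemma's single exception.
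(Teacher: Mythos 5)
Your proof is correct and proceeds by essentially the same method as the paper's: both arguments exhibit the elements of $\operatorname{Frac}(V_{n,d})$ missing from the obvious generators of $\mathcal{P}_{n,d,\mathbf{m}}$ as ratios of monomials whose exponent vectors lie in the pinched semigroup. The paper shortens the bookkeeping by assuming $\mathbf{m}$ is in descending order, so the single pair $x_ix_n^{d-1}$, $x_n^d$ never collides with $\mathbf{m}$ outside the excluded case $(2,2,(1,1))$, whereas you reduce to adjoining the one element $x^{\mathbf{m}}$ and then handle the boundary situations ($\max(\mathbf{m})=d$, and $\mathbf{m}=(d-1)\mathbf{e}_n+\mathbf{e}_j$ with either a spare coordinate or $d\ge 3$) explicitly; both routes are valid and of comparable length.
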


\begin{proof}
The first statement is clear since $\mcp_{2,2,(1,1)}$ is a regular ring. Now assume without loss of generality that $\mathbf{m}$ is arranged in descending order, and we observe that $x_i/x_1 \in \Frac(\mathcal{P}_{n,d,\mathbf{m}})$ for each $1 \le i \le n-1$: \[ \dfrac{x_i}{x_n} = \dfrac{x_ix_n^{d-1}}{x_n^d}\] where both the numerator and denominator are monomials in $\mathcal{P}_{n,d,\mathbf{m}}$. Since $\mathcal{P}_{n,d,\mathbf{m}}$ is an $n$-dimensional domain, and the claimed $k$-algebra generators of $\Frac(\mathcal{P}_{n,d,\mathbf{m}})$ are clearly algebraically independent, the result follows.
\end{proof}

\begin{srmk}
It follows immediately from the above lemma that the extension of fields $k \subset \text{Frac}(\mcp_{n,d,\mathbf{m}})$ is \emph{rational} for all choices of $n$, $d$, and $\mathbf{m}$.
\end{srmk}

\begin{sthm} \label{thm:normalization of PV rings}
The pinched Veronese ring $\mcp_{2,2,(1,1)}$ is normal. The normalizations of the other pinched Veronese rings $\mathcal{P}_{n,d,\mathbf{m}}$ are as follows.
\begin{align*}
\overline{\mathcal{P}_{n,d,\mathbf{m}}} = \left\{ \begin{array}{ll}
    V_{n,d} & \text{ if } \max(\mathbf{m}) < d \\
    \mathcal{P}_{n,d,\mathbf{m}} & \text{ if } \max(\mbm)=d \\
\end{array} \right.
\end{align*}
\end{sthm}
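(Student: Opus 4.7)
The plan is to handle the three cases in sequence, using the semigroup criterion for normality (a pointed affine semigroup ring $k[A]$ is normal if and only if $A=\mbz A\cap\mbr_{\geq 0}A$). The case $\mcp_{2,2,(1,1)}$ is immediate, since this ring coincides with the polynomial subring $k[x_1^2,x_2^2]$ and is in particular regular. For the remaining cases, \Cref{lem:fraction field of pinched veroneses} gives $\Frac(\mcp_{n,d,\mathbf{m}})=\Frac(V_{n,d})$, and $V_{n,d}$ itself is normal as a direct summand of $k[x_1,\ldots,x_n]$. Hence the normalization $\overline{\mcp_{n,d,\mathbf{m}}}$ sits between $\mcp_{n,d,\mathbf{m}}$ and $V_{n,d}$, and the question reduces to deciding whether the missing monomial $x^{\mathbf{m}}$ is integral over $\mcp_{n,d,\mathbf{m}}$.

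For $\max(\mathbf{m})<d$, the constraint $|\mathbf{m}|=d$ forces two coordinates $m_i, m_j$ of $\mathbf{m}$ to be positive. I would then exhibit the identity
\[
(x^{\mathbf{m}})^{2}=x^{\mathbf{m}+\mbe_i-\mbe_j}\cdot x^{\mathbf{m}-\mbe_i+\mbe_j},
\]
which expresses $(x^{\mathbf{m}})^2$ as a product of two degree-$d$ monomials with nonnegative exponents, neither equal to $x^{\mathbf{m}}$. Thus $(x^{\mathbf{m}})^2\in\mcp_{n,d,\mathbf{m}}$ and $x^{\mathbf{m}}$ is integral over the pinched ring; since $V_{n,d}=\mcp_{n,d,\mathbf{m}}[x^{\mathbf{m}}]$, the whole Veronese ring is integral over $\mcp_{n,d,\mathbf{m}}$, and combined with the normality of $V_{n,d}$ this yields $\overline{\mcp_{n,d,\mathbf{m}}}=V_{n,d}$.

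The substantive case is $\max(\mathbf{m})=d$, which by symmetry may be reduced to $\mathbf{m}=d\mbe_1$. Here the plan is to apply the semigroup criterion directly, via the combinatorial description
\[
\mca_{n,d,d\mbe_1}=\bigl\{\mathbf{a}\in\mbn^n:d\mid|\mathbf{a}|\ \text{and}\ a_1\leq(d-1)|\mathbf{a}|/d\bigr\}.
\]
The inclusion $\subseteq$ is clear, since both defining conditions are preserved under nonnegative integer combinations and hold for every generator. For $\supseteq$, writing $|\mathbf{a}|=kd$, the inequality rearranges to $a_2+\cdots+a_n\geq k$, so one may place a single non-$x_1$ unit into each of $k$ blocks of capacity $d$ and then distribute the remaining $k(d-1)$ units arbitrarily among the $k(d-1)$ empty slots; each block then sums to a degree-$d$ vector distinct from $d\mbe_1$, producing the desired decomposition of $\mathbf{a}$. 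Granting this description, the subgroup generated by $\mca_{n,d,d\mbe_1}$ in $\mbz^n$ is $\{\mathbf{a}:d\mid|\mathbf{a}|\}$ (for instance, $(d-1,1,0,\ldots,0)-(d-2,2,0,\ldots,0)=\mbe_1-\mbe_2$), and both defining conditions are invariant under positive integer scaling, so the saturation inside this group adds no new elements. The main obstacle is executing the greedy decomposition cleanly; once the counting inequality $a_2+\cdots+a_n\geq k$ is in hand, the decomposition is essentially forced, but it is the combinatorial heart of the argument.
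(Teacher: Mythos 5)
Your proposal is correct and follows essentially the same route as the paper: the exceptional ring $\mcp_{2,2,(1,1)}$ is regular, for $\max(\mbm)<d$ one shows the pinched monomial is integral over $\mcp_{n,d,\mbm}$ and lies in $\Frac(\mcp_{n,d,\mbm})=\Frac(V_{n,d})$ so that the normal ring $V_{n,d}$ is the normalization, and for $\max(\mbm)=d$ one verifies that the semigroup is saturated and invokes normality of normal semigroup rings. The differences are cosmetic: you use the quadratic equation $(x^{\mbm})^2=x^{\mbm+\mbe_i-\mbe_j}\cdot x^{\mbm-\mbe_i+\mbe_j}$ where the paper uses $T^d-(x^{\mbm})^d$ with $(x^{\mbm})^d=\prod_i(x_i^d)^{m_i}$, and in the $\max(\mbm)=d$ case you prove the membership criterion $a_1\le (d-1)|\mba|/d$ explicitly via the block decomposition, whereas the paper asserts the equivalent description of the complement and runs the scaling argument contrapositively.
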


\begin{proof}
The first statement is clear since $\mcp_{2,2,(1,1)}$ is a regular ring. We may assume, without loss of generality, that $\mathbf{m}$ is in descending order. Denote $P = \mathcal{P}_{n,d,\mathbf{m}}$ and $V=V_{n,d}$. Begin with the case that $\max(\mbm)<d$.

Note that if $f=x_1^{m_1}\cdots x_n^{m_n} \in V$ then $f^d = (x_1^{d})^{m_1}\cdots (x_n^d)^{m_n} \in P$ (here we have used $\max(\mbm)<d$.) Thus $f$ satisfies the monic polynomial $T^d-f^d \in P[T]$. So, $P \subseteq V$ is an integral extension.

Now we show that $f \in \Frac(P)$. By \Cref{lem:fraction field of pinched veroneses}, we may write:
\[ f = \left( \dfrac{x_1}{x_n}\right)^{m_1} \left( \dfrac{x_2}{x_n}\right)^{m_2} \cdots \left( \dfrac{x_{n-1}}{x_n} \right)^{m_{n-1}}x_n^d \in \Frac(P).\] This shows that $V$ is contained in the normalization of $P$. Since $V$ is normal (as it is a direct summand of a polynomial ring), it follows that the normalization of $P$ is $V$.

To complete the proof, it suffices to show that $\mathcal{P}_{n,d,\mathbf{m}}$ is a normal semigroup ring for $\mathbf{m} = (d,0,\ldots, 0)$. Since the affine semigroup ring corresponding to a normal affine semigroup is normal, we show that the semigroup $\mca_{n,d,\mathbf{m}}$ is normal, i.e., we show that for a positive integer $m$,  if $m \mba  \in \mca_{n,d,\mathbf{m}}$ then $\mba \in \mca_{n,d,\mathbf{m}}$ by a contrapositive argument. So, assume that $\mathbf{a} \notin \mca_{n,d,\mathbf{m}}$. This happens precisely when $\mathbf{a} = (td-q, \mathbf{q})$ where $\mathbf{q}$ is a vector in $\mbn^{n-1}$ with $|\mathbf{q}| = q \leq t-1$. Then, $m\mba = (mtd - mq, m\mathbf{q})$. Notice that $m\mba \notin \mca_{n,d,\mathbf{m}}$ since: \[|m\mathbf{q}| = mq \leq mt-m \leq mt-1.\] This finishes the proof (cf. \cite[Theorem~3.1]{GM}). 
\end{proof}

\begin{srmk}
It follows immediately from Hochster's theorem and \Cref{thm:normalization of PV rings} that for $\max(\mathbf{m}) = d$, $\mathcal{P}_{n,d,\mathbf{m}}$ is a Cohen-Macaulay ring.
\end{srmk}

\subsection{An overview of prime characteristic singularities}

We are interested in exploring the singularities that pinched Veronese rings may have, and in the prime characteristic case we will see in Section 4 that nearly all pinched Veronese rings are \textit{$F$-nilpotent}. First, we need to establish some prime characteristic background. We will assume that all prime characteristic rings $R$ mentioned in this section are \textbf{$F$-finite}, that is, the Frobenius endomorphism $F:R\rightarrow R$ by $r\mapsto r^p$ is finite as a map of rings.

Fix $(R,\mfm)$ a local ring of prime characteristic $p>0$. The map $F:R\rightarrow R$ has been a central object of study in the singularity theory of prime characteristic rings and beyond since Ernst Kunz's famous theorem that $R$ is regular if and only if $F$ is a flat map (\cite[Theorem~2.1]{KunzRegular}). One of many consequences of flatness is that all ideals of a regular ring are Frobenius closed, as outlined below.

\begin{sdef} \label{def: fc/tc}
Let $R$ be a ring of prime characteristic $p>0$ and dimension $d$, and let $I\subset R$. The \textbf{$e$th Frobenius bracket power} $I^{\fbp{p^e}}$ of $I$ is the ideal generated by the set of elements $\{x^{p^e} \mid x \in I\}$. The \textbf{Frobenius closure} $I^F$ of $I$ is the ideal: \[ I^F=\left\lbrace x \in R \left| x^{p^e} \in I^{\fbp{p^e}}\right. \text{ for some } e \in \mbn \right\rbrace .\] The ideal $I$ is \textbf{Frobenius closed} if $I^F = I$.
\end{sdef}

Another important ideal closure operation in prime characteristic defined similarly is \textit{tight closure} $I^*$ of an ideal $I$, but we will not have need for it in this paper, and the definition is similar to that of the tight closure of the zero submodule given below. We refer the reader to \cite{HH} for the details of tight closure.

Two classical $F$-singularities are defined in terms of these ideal closure operations. 

\begin{sdef}
Let $R$ be a local ring of prime characteristic $p>0$. Then, if $R$ is a $\mbn$-graded local ring, $R$ is \textbf{$F$-regular} if all ideals of $R$ are tightly closed. If $R$ is reduced and excellent, $R$ is \textbf{$F$-pure} if all ideals of $R$ are Frobenius closed.
\end{sdef}

The notions of $F$-regularity and $F$-purity are defined outside of the cases given above, but many technicalities ensue. Since all of our prime characteristic rings will be standard graded $F$-finite domains, the definitions above suffice for us. For different notions of $F$-regularity and their equivalence in the graded case, we refer the reader to Lyubeznik-Smith \cite{LS}. For different notions of $F$-purity, we refer the reader to Hochster \cite{HochsterPurity}.

Other commonly studied $F$-singularity types are defined in terms of the \textit{Frobenius action} on local cohomology. In particular, the Frobenius map $F:R\rightarrow R$ induces a map $F:H^j_I(R)\rightarrow H^j_I(R)$ for any $j \in \mbn$, and $I\subset R$. It is important to note that this map is not $R$-linear, but $p$-linear, in that $F(x\xi) = x^p F(\xi)$ for any $x \in R$, $\xi \in H^j_I(R)$. We will focus on the case $I=\mfm$ when $(R,\mfm)$ is local.

\begin{sdef}
Let $(R,\mfm)$ be a local domain of prime characteristic $p>0$ and dimension $d$. The \textbf{tight closure of $0$ in $H^d_\mfm(R)$} is the $R$-submodule: \[
0^*_{H^d_\mfm(R)} = \{ \xi \in H^d_\mfm(R) \mid \text{there is a } 0 \neq c\in R \text{ with } cF^e(\xi)=0 \text{ for all } e \gg 0\}.\] A Cohen-Macaulay ring $R$ is \textbf{$F$-rational} if $0^*_{H^d_\mfm(R)}=0$.
\end{sdef}

Tight closure in this sense can be defined outside of the domain case but we will avoid the technicalities here. There are also other equivalent notions of $F$-rationality which make use of tight closure of parameter ideals. In particular, the two papers of Smith (\cite{Smith1994} and \cite{Smith2}) help outline the equivalence of these definitions.

\begin{srmk}\label{rmk: direct summand of polynomial ring is F-rational}
We note that $F$-rational rings are normal Cohen-Macaulay domains. Furthermore, if $R$ is a direct summand of a polynomial ring, then $R$ is $F$-regular, which implies it is $F$-rational. We refer the reader to Hochster and Huneke's seminal paper on tight closure \cite{HH} for an overview of these singularity types. 
\end{srmk}

Weaker than $F$-rationality is $F$-injectivity. In particular a local ring $(R,\mfm)$ of prime characteristic $p>0$ is \textbf{$F$-injective} if the Frobenius action $F:H^j_\mfm(R)\rightarrow H^j_\mfm(R)$ is injective for all $j$. Of recent interest have been singularity types opposite to $F$-injectivity. 

\begin{sdef}
Let $(R,\mfm)$ be a local ring of prime characteristic $p>0$ and dimension $d$. If for each $0 \le j < d$ we have that $F^e:H^j_\mfm(R)\rightarrow H^j_\mfm(R)$ is the zero map some $e \gg 0$, then $R$ is \textbf{weakly $F$-nilpotent}. If $R$ is weakly $F$-nilpotent and additionally $F^e(0^*_{H^d_\mfm(R)}) = 0$ for some $e \gg 0$, then $R$ is \textbf{$F$-nilpotent}.
\end{sdef}

\begin{srmk}
Notice that a local ring $(R,\mfm)$ is $F$-rational if and only if it is both $F$-nilpotent and $F$-injective. 
\end{srmk}

The definition of $F$-nilpotence was first given in Srinivas-Takagi \cite{ST} and an interesting ideal-theoretic characterization was given in Polstra-Quy \cite{PQ}, which we will explore in Section $4$.

\begin{srmk}
If $R$ is a graded ring, then $F$ is not a homogeneous map, but does multiply the degree of a homogeneous element by $p$. Consequently, if $\mfm$ is the homogeneous maximal ideal of $R$, the induced map $F:H^j_\mfm(R)\rightarrow H^j_\mfm(R)$ also multiplies the degree of a homogeneous class by $p$.
\end{srmk}

\begin{sdef}
Given a graded $R$-module $M$, a \textbf{graded Frobenius action} on $M$ is a $p$-linear map $f:M \rightarrow M$ so that $f([M]_n)\subset [M]_{np}$. We say $M$ is \textbf{nilpotent} if $f^e=0$ for some $e \in \mbn$. 
\end{sdef}

A graded Frobenius action $f:M\rightarrow M$ on an $R$-module $M$ induces another graded Frobenius action, which we also call $f$, on the graded local cohomology $H^j_I(M)$ for any homogeneous ideal $I\subset R$, and if $f:M\rightarrow M$ is nilpotent, so is $f:H^j_I(M) \rightarrow H^j_I(M)$ for all $j$. 

\begin{srmk}\label{rmk: homological algebra with graded frob actions}
Let $R$ be a graded ring of prime characteristic $p>0$ and suppose $A$, $B$, and $C$ are graded $R$-modules which fit into a commutative diagram as below: \begin{center}
    \begin{tikzcd}
        0\arrow{r} & A \arrow{d}{f_A} \arrow{r}{\alpha} & B \arrow{d}{f_B} \arrow{r}{\beta} & C \arrow{d}{f_C} \arrow{r} & 0 \\
        0\arrow{r} & A \arrow{r}{\alpha} & B \arrow{r}{\beta} & C \arrow{r} & 0
    \end{tikzcd}
\end{center} such that the rows are exact and the vertical maps are graded Frobenius actions. Then, for any homogeneous ideal $I\subset R$, we also get a commutative diagram of local cohomology modules: \begin{center}
    \begin{tikzcd}
        \cdots \arrow{r} & H^j_I(A) \arrow{d}{f_A} \arrow{r}& H^j_I(B) \arrow{d}{f_B} \arrow{r}{} & H^j_I(C) \arrow{d}{f_C} \arrow{r} & \cdots \\
        \cdots \arrow{r} & H^j_I(A) \arrow{r} & H^j_I(B) \arrow{r} & H^j_I(C) \arrow{r} & \cdots
    \end{tikzcd}
\end{center} where the rows are the induced long exact sequences of each row and the vertical maps are the induced Frobenius actions in each place. We refer the reader to \cite[Remark~2.2]{MM} for further details. 
\end{srmk}

We now conclude this subsection with a lemma of independent interest which we will apply to the normalization map $\mcp_{n,d,\mbm} \rightarrow V_{n,d}$. 

\begin{slem}\label{lem: nilpotent cokernel F-rational}
Let $(R,\mfm) \rightarrow (S,\mfn)$ be an inclusion of local rings (or graded rings with homogeneous maximal ideals $\mfm$ and $\mfn$ respectively) of prime characteristic $p>0$, and let $C$ be the cokernel of the map. Suppose that $S$ is $F$-rational and the induced Frobenius action $\overline{F}: C \rightarrow C$ is nilpotent. Then $R$ is $F$-nilpotent. 
\end{slem}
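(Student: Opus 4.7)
The plan is to apply the local cohomology functor $H^\bullet_\mfm(-)$ to the short exact sequence $0 \to R \to S \to C \to 0$, use the resulting long exact sequence of graded-Frobenius-equivariant maps provided by \Cref{rmk: homological algebra with graded frob actions}, and then transfer the $F$-rationality of $S$ together with the uniform nilpotence of $\overline{F}$ on $C$ to the desired conclusion about $R$.

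As a preliminary step I would verify that $R \hookrightarrow S$ is module-finite: if $\overline{F}^{e_0} = 0$ on $C$, then $s^{p^{e_0}} \in R$ for every $s \in S$, so $S^{p^{e_0}} \subseteq R$, and since $S$ is $F$-finite, $S$ is module-finite over $S^{p^{e_0}}$ and hence over $R$. It follows that $\mfm S$ is $\mfn$-primary, $H^j_\mfm(S) = H^j_\mfn(S)$ for all $j$, and $\dim R = \dim S = d$. Because $S$ is $F$-rational, it is Cohen-Macaulay with $H^j_\mfn(S) = 0$ for $j < d$ and $0^*_{H^d_\mfn(S)} = 0$. For $1 \le j < d$, the long exact sequence then collapses to an $F$-equivariant isomorphism $H^j_\mfm(R) \cong H^{j-1}_\mfm(C)$; since $\overline{F}^{e_0}$ is the zero map on $C$, functoriality gives $F^{e_0} = 0$ on $H^{j-1}_\mfm(C)$ and hence on $H^j_\mfm(R)$, uniformly across $j$, establishing weak $F$-nilpotence of $R$. (The case $j = 0$ is vacuous, as $R \subseteq S$ is a domain.)

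For the top cohomology I would analyze the tail $0 \to H^{d-1}_\mfm(C) \to H^d_\mfm(R) \xrightarrow{\phi} H^d_\mfn(S)$. Given $\xi \in 0^*_{H^d_\mfm(R)}$ with witness $0 \ne c \in R$, the $F$-equivariance of $\phi$ yields $c F^e(\phi(\xi)) = 0$ for all $e \gg 0$, and since $c$ remains nonzero in $S$, $F$-rationality of $S$ forces $\phi(\xi) \in 0^*_{H^d_\mfn(S)} = 0$. Hence $0^*_{H^d_\mfm(R)}$ sits inside the image of $H^{d-1}_\mfm(C)$, which is annihilated by $F^{e_0}$; combined with the weak $F$-nilpotence above, this gives $F$-nilpotence of $R$. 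The main subtlety I anticipate is the identification $H^j_\mfm(S) = H^j_\mfn(S)$, which is what allows the $\mfn$-based $F$-rationality of $S$ to constrain $\mfm$-local cohomology of $R$; once module-finiteness is in hand, the rest is essentially a diagram chase in the long exact sequence.
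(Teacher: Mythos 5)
Your proposal is correct and follows essentially the same route as the paper: the long exact sequence in $\mfm$-local cohomology applied to $0 \to R \to S \to C \to 0$, the vanishing $H^j_\mfm(S)=0$ for $j<\dim S$ to get $F$-equivariant isomorphisms $H^{j-1}_\mfm(C)\simeq H^j_\mfm(R)$, and the tail of the sequence together with $0^*_{H^d_\mfn(S)}=0$ to trap $0^*_{H^d_\mfm(R)}$ in the image of $H^{d-1}_\mfm(C)$. The only cosmetic difference is that you invoke $F$-finiteness to get module-finiteness of $R\subseteq S$, whereas the paper deduces $\sqrt{\mfm S}=\mfn$ (and hence the needed identification of supports and equality of dimensions) directly from $S^{p^{e_0}}\subseteq R$ via a unit argument; both are fine.
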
 

\begin{proof}
Since $S$ is $F$-rational, $S$ is a domain, which implies $R$ is a domain. Further, notice that for some $e \in \mbn$ we have $\overline{F}^e:C \rightarrow C$ is the zero map implies for all $s \in S$, $s^{p^e} \in R$. We now also note that $\sqrt{\mfm S}=\mfn$, as if $s \in \mfn$, $s^{p^e} \in R$, and if $s^{p^e} \not \in \mfm \subset \mfm S$, then $s^{p^e}$ is a unit of $R$, and consequently a unit of $S$, contradicting that $s \in \mfn$. This also implies that $R$ and $S$ are both of the same dimension $n$.  

As in \Cref{rmk: homological algebra with graded frob actions}, associated to the short exact sequence: \begin{center}
\begin{tikzcd}
0 \arrow{r} & R \arrow{r} & S \arrow{r} & C \arrow{r} & 0
\end{tikzcd}\end{center} we obtain the long exact sequence: \begin{center}
\begin{tikzcd}
\cdots \arrow{r} & H^{j-1}_{\mfm}(C) \arrow{r} & H^j_{\mfm}(R) \arrow{r}& H^j_{\mfm}(S) \arrow{r} & H^j_\mfm(C) \arrow{r} & \cdots 
\end{tikzcd}
\end{center} in which we know $H^j_\mfm(S)=0$ for $j<n$ as $S$ is Cohen-Macaulay. Consequently, $H^{j-1}_\mfm(C) \simeq H^j_\mfm(R)$ for all $j<n$, and as $\overline{F}^e:H^{j-1}_\mfm(C)\rightarrow H^{j-1}_\mfm(C)$ is the zero map, we also have $F^e:H^j_\mfm(R)\rightarrow H^j_\mfm(R)$ is the zero map for $j<n$. Thus, $R$ is weakly $F$-nilpotent. 

To show that $R$ is $F$-nilpotent, let $\xi \in 0^*_{H^n_\mfm(R)}$ so that there is a nonzero $c \in R$ with $cF^e(\xi)=0$ for all $e \gg 0$. When $j=n$ in the long exact sequence above, we get an exact sequence: 
\begin{center}
\begin{tikzcd}
H^{n-1}_{\mfm}(C) \arrow{r}{\delta} & H^n_{\mfm}(R) \arrow{r}{\alpha}& H^n_{\mfm}(S)
\end{tikzcd}
\end{center} and for all $e \gg 0$ we have $\alpha(cF^e(\xi))=cF^e(\alpha(\xi))=0$ by \Cref{rmk: homological algebra with graded frob actions}. We also have $c$ is nonzero in $S$, so $\alpha(\xi)\in 0^*_{H^n_{\mfm}(S)}$, which implies $\alpha(\xi) = 0$ as $S$ is $F$-rational. 

Thus $\xi \in \im(\delta)$ by exactness, and we have $\xi=\delta(\xi')$ for some $\xi'\in H^{n-1}_\mfm(C)$. Since $C$ is nilpotent, we have $F^e(\xi)=F^e(\delta(\xi'))=\delta(\overline{F}^e(\xi'))=0$, and thus, $0^*_{H^n_{\mfm}(R)}$ is nilpotent. So, $R$ is $F$-nilpotent.
\end{proof} 

We note that the hypotheses of \Cref{lem: nilpotent cokernel F-rational} is satisfied when $R$ and $S$ are graded, and the cokernel $C$ has dimension $0$ as an $R$-module and is concentrated in positive degrees, since the induced map on $C$ multiplies the degree of a homogeneous element by $p$. 

\subsection{Key Combinatorial Results}

Our primary tool in the later sections is to control the cokernel $C$ of the natural inclusion of $\mcp_{n,d,\mbm}$ into its normalization $V_{n,d}$. Since this map corresponds to the natural ring homomorphism induced by the semigroup inclusion $\mca_{n,d,\mbm}\subset \mca_{n,d}$, we can understand the cokernel by computing the set difference $\mca_{n,d}\setminus \mca_{n,d,\mbm}$. 

Throughout this subsection, we let $T=T_{n,d}=\{\mbe \in \mbn^n \mid |\mbe|=d\}$ be the generating set for $\mca_{n,d}$ and for any fixed $\mbm \in T$, we let $T_{\mbm}=T\setminus \{\mbm\}$ be the generating set for $\mca_{n,d,\mbm}$. We will often re-write elements of $\mca_{n,d}$ in terms of elements of $\mca_{n,d,\mbm}$ and for brevity the following definition will be useful. 

\begin{sdef}
Let $\mba \in \mbz^n$. The \textbf{$(i,j)$-perturbation} of $\mba = (a_1,\ldots,a_i,\ldots,a_j,\ldots,a_n)$ is the vector: \[\mba'=(a_1,\ldots,a_i+1,\ldots,a_j-1,\ldots,a_n).\] We will refer to $\mathbf{b}$ as a perturbation of $\mba$ if $\mathbf{b}$ is the $(i,j)$-perturbation of $\mba$ for some $1\le i\le n$ and $1\le j\le n$.
\end{sdef}

\begin{srmk}
Notably, if $\mathbf{b}$ is a perturbation of $\mba$, $|\mathbf{b}|=|\mba|$, and in particular, $(i,j)$-perturbations of vectors in $\mca_{n,d}$ are still in $\mca_{n,d}$ as long as $j \neq 0$. Furthermore, if $\max(\mbm)<d$, then there is always a pair $(i,j)$ so that the $(i,j)$-perturbation and the $(j,i)$-perturbation of $\mbm$ is in $\mca_{n,d,\mbm}$. 
\end{srmk}

We also remark that when trying to re-write an element $\mbe$ of $\mca_{n,d}$ as a sum of vectors in $\mca_{n,d,\mbm}$, if $\mbe \neq \mbm + \mbf$ for any $\mbf \in \mca_{n,d}$ then $\mbe$ can be written purely as a sum of vectors in $T_{\mbm}$ and is thus in $\mca_{n,d,\mbm}$. We will implicitly use this fact in the proofs later in this subsection.

First, we consider the case $\max(\mbm)<d-1$ (for which we require $d>2$), where show that only point of $\mca_{n,d}$ missing from $\mca_{n,d,\mbm}$ is $\mbm$. 

\begin{slem}\label{lem: combinatorial cokernel max m < d-1 case}
Suppose $\mbm \in T_{n,d}$ has $\max(\mbm)<d-1$. Then $\mca_{n,d}\setminus \mca_{n,d,\mbm} = \{\mbm\}$.
\end{slem}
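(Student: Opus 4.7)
The plan is to handle the two set inclusions separately. The inclusion $\{\mbm\}\subseteq\mca_{n,d}\sm\mca_{n,d,\mbm}$ is immediate: any element of $\mca_{n,d,\mbm}$ with coordinate-sum $d$ must be a single generator, hence an element of $T_{\mbm}$, which by definition excludes $\mbm$. For the reverse inclusion, I would fix $\mbe\in\mca_{n,d}$ with $\mbe\ne\mbm$ and write $\mbe=\mbe_1+\cdots+\mbe_k$ with each $\mbe_i\in T$. If $k=1$ then $\mbe\in T_{\mbm}$ already; otherwise $k\ge 2$, and I would reduce the problem to a \emph{pairing claim}: for every $\mbf\in T$, the sum $\mbm+\mbf$ admits an expression $\mathbf{u}+\mathbf{v}$ with $\mathbf{u},\mathbf{v}\in T_{\mbm}$. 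Granting this claim, I would pair any summand $\mbe_i=\mbm$ with another summand $\mbe_j$ (available since $k\ge 2$) and replace $\mbe_i+\mbe_j$ by two generators in $T_{\mbm}$; this strictly decreases the number of $\mbm$-summands, and iteration eliminates $\mbm$ from the decomposition entirely.

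To prove the pairing claim I would work in the perturbation language introduced above. Since $\max(\mbm)<d-1$, the vector $\mbm$ must have at least two positive coordinates; otherwise its sole positive entry would equal $d$, contradicting the hypothesis. Given $\mbf\in T$, I would look for indices $i_1\ne i_2$ with $f_{i_1}\ge 1$ and $m_{i_2}\ge 1$, and set $\mathbf{u}=\mbm+e_{i_1}-e_{i_2}$ (the $(i_1,i_2)$-perturbation of $\mbm$) and $\mathbf{v}=\mbf+e_{i_2}-e_{i_1}$ (the $(i_2,i_1)$-perturbation of $\mbf$), where $e_i$ denotes the $i$th standard basis vector. Then $\mathbf{u}+\mathbf{v}=\mbm+\mbf$, both lie in $T$ by the constraints on $i_1,i_2$, and $\mathbf{u}\ne\mbm$ automatically since $i_1\ne i_2$. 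The delicate requirement $\mathbf{v}\ne\mbm$ fails precisely when $\mbf=\mbm+e_{i_1}-e_{i_2}$.

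The hard part will be verifying that such indices $(i_1,i_2)$ can always be chosen to meet all four constraints. I expect the analysis to split naturally by the relationship between $\mbf$ and $\mbm$: if $\mbf=\mbm$, any two distinct indices where $\mbm$ is positive work; if $\mbf$ is not a perturbation of $\mbm$, then $\mathbf{v}\ne\mbm$ is automatic and a brief pigeonhole using the two positive coordinates of $\mbm$ produces a valid pair. The delicate borderline case is $\mbf=\mbm+e_j-e_k$ for specific $j\ne k$, where the pair $(j,k)$ must be avoided; here the full strength of $\max(\mbm)\le d-2$ enters, for if $\mbm$ is supported only on $\{j,k\}$ the bound forces $m_k\ge 2$, and the swapped choice $(i_1,i_2)=(k,j)$ then satisfies all constraints. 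This final subcase is where the hypothesis $\max(\mbm)<d-1$ is used in an essential way, and I expect it to constitute the main technical obstacle.
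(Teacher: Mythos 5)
Your argument is correct, and its engine --- trading one unit between $\mbm$ and a partner generator, via an $(i_1,i_2)$-perturbation of $\mbm$ paired with the opposite perturbation of the partner --- is exactly the move the paper uses. Where you genuinely differ is in the global bookkeeping. The paper first proves that \emph{every} lattice point on the hyperplane $x_1+\cdots+x_n=2d$ lies in $\mca_{n,d,\mbm}$, and then reduces elements of degree $td$, $t>2$, to that case by an even/odd splitting of the degree; you instead fix an arbitrary decomposition of $\mbe$ into generators and induct on the number of summands equal to $\mbm$, applying your pairing claim to one offending pair at a time. Your reduction buys a cleaner proof: the parity analysis disappears, and the pairing claim you isolate is precisely the content of the paper's degree-$2d$ step (a degree-$2d$ element is a sum of two generators, and only sums involving $\mbm$ are in question), so nothing is lost. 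Your case analysis of the pairing claim is sound, including the identification of the one place where $\max(\mbm)\le d-2$ is essential. The only step you should write out fully is the borderline case where $\mbf$ is the $(j,k)$-perturbation of $\mbm$ but the support of $\mbm$ is \emph{not} contained in $\{j,k\}$: there the swapped pair $(k,j)$ can fail (for instance if $m_k=1$, so $f_k=0$), but choosing $l\notin\{j,k\}$ with $m_l\ge 1$ and taking $(i_1,i_2)=(j,l)$ works, since $f_j=m_j+1\ge 1$, $m_l\ge 1$, and $(j,l)\ne (j,k)$; so this omission is cosmetic rather than a gap.
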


\begin{proof}
Without loss of generality, we assume $\mbm$ is in descending order. Let $\mbf \in \mca_{n,d}$ with $\mbf = \mbm + \mbe$ for some $\mbe \in \mca_{n,d}$. First, we show if $|\mbm+\mbe|=2d$, then $\mbm+\mbe \in A=\mca_{n,d,\mbm}$. We have that either $\max(\mbe)=d$ or $\mbe$ has at least two nonzero entries. 

In the former case, assume $e_1=d$. Then, letting $\mbm'$ be the $(1,2)$-perturbation of $\mbm$ and $\mbe'$ the $(2,1)$-perturbation of $\mbe$, we get $\mbm+\mbe = \mbm'+\mbe'$. If $e_i=d$ for some $i>1$, we can re-write $\mbm+\mbe=\mbm'+\mbe'$ similarly using the $(i,1)$-perturbation $\mbm'$ of $\mbm$ and the $(1,i)$-perturbation $\mbe'$ of $\mbe$. Since $\max(\mbm)<d-1$ and $\mbm$ is in descending order, all of the perturbations above are in $A$, we have shown $\mbm+\mbe\in A$ in any of the cases.

If $\mbe$ has at least two nonzero entries, say $e_i\neq 0$ and $e_j\neq 0$ with $i<j$, then we can re-write similarly. First, let $\mbm'$ be the $(i,1)$-perturbation of $\mbm$ and $\mbe'$ be the $(1,i)$-perturbation of $\mbe$, and then $\mbm+\mbe = \mbm'+\mbe'$ and $\mbe'$ is in $A$ unless $\mbe'=\mbm$. In this case, $e_i=m_i+1$ and $e_j=m_j$, and then instead of using $\mbe'$, we can re-write $\mbm+\mbe=\mbm'+\mbe''$ where $\mbm''$ is the $(j,1)$-perturbation of $\mbm$ and $\mbe''$ is the $(1,j)$-perturbation of $\mbe$. This shows that $A$ has every vector in the hyperplane $x_1+x_2+\cdots+x_n=2d$ of $\mbn^n$.

Finally, if $\mbe \in \mca_{n,d}$ with $|\mbe|=td$ for some $t>2$ and $t$ is even, we can write $\mbe$ as a sum of vectors in the $\sum x_i=2d$ hyperplane, which shows $\mbe \in A$. If $t$ is odd, $\mbe = \mbf+\mbe'$, where $|\mbe'|=(t-1)d$ and $|\mbf|=d$. Since $t-1$ is even, we can write $\mbe' = \mathbf{a}+\mathbf{b}$, where $\mba,\mathbf{b}\in A$ and $|\mathbf{b}|=(t-2)d$. Then $\mbe = \mbf + \mba + \mathbf{b}$, and $\mbf + \mba \in A$ since $|\mbf + \mba| = 2d$, thus $\mbe \in A$ also.
\end{proof}

Now we consider the $\max(\mbm)=d-1$ case, in which we show that the behavior depends on whether $d=2$ or $d>2$.

\begin{slem}\label{lem: combinatorial cokernel max m = d-1 case}
Suppose $\mbm \in T_{n,d}$ has $\max(\mbm)=d-1$, and without loss of generality we may assume $\mbm=(d-1,1,0,\ldots,0)$. Then, $\mca_{n,d}\setminus \mca_{n,d,\mbm}$ depends on $d$. \[\mca_{n,d}\setminus \mca_{n,d,\mbm} = \left\lbrace  \begin{array}{ll}
\{(2s+1,2t+1,0,\ldots,0) \mid s,t\ge 0\} & \text{ if } d=2 \\
\{(ds-1,1,0,\ldots,0) \mid s \ge 1\} & \text{ if } d>2 
\end{array}
\right.
\] 
\end{slem}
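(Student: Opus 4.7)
The plan is to verify both inclusions of the claimed set equality. For the inclusion $\supseteq$, which says each listed vector lies outside $\mca_{n,d,\mbm}$, I would suppose a decomposition $\mbf = \sum c_i \mbe_i$ with $\mbe_i \in T_{\mbm}$ and derive a contradiction. Since coordinates $3,\ldots,n$ of $\mbf$ vanish, every $\mbe_i$ actually contributing to the sum must have zeros past coordinate $2$, so each $\mbe_i$ takes the form $(a, d-a, 0, \ldots, 0)$. When $d > 2$ and $\mbf = (ds-1, 1, 0, \ldots, 0)$, the total second coordinate is $1$, forcing exactly one summand to have second coordinate $1$, namely $\mbm$ itself, contradicting $\mbe_i \in T_{\mbm}$. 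When $d = 2$ and $\mbf = (2s+1, 2t+1, 0, \ldots, 0)$, the admissible summands are $\{(2,0,\ldots,0),\ \mbm,\ (0,2,0,\ldots,0)\}$, and matching the parity of either odd coordinate forces the multiplicity of $\mbm$ to be odd, hence positive.

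For the reverse inclusion, given $\mbf \in \mca_{n,d}$ not of the listed exceptional form, I would produce a $T_{\mbm}$-decomposition of $\mbf$ via a minimal-multiplicity rewriting argument. Among all decompositions $\mbf = \sum c_i \mbe_i$ with $\mbe_i \in T$, pick one minimizing $k := c_{\mbm}$; the goal is to show $k=0$. If $k \ge 2$, rewrite two copies of $\mbm$ using the identity $2\mbm = (d, 0, \ldots, 0) + (d-2, 2, 0, \ldots, 0)$, whose summands belong to $T_{\mbm}$ for every $d \ge 2$, contradicting minimality. If $k = 1$, pick any partner $\mbe_1 \in T_{\mbm}$ appearing in the decomposition and try to rewrite $\mbm + \mbe_1$ as a sum of two $T_{\mbm}$-elements. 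A perturbation analysis, modeled on the proof of \Cref{lem: combinatorial cokernel max m < d-1 case}, shows this rewrite succeeds unless $\mbe_1 = (d, 0, \ldots, 0)$ (bad for all $d \ge 2$) or $\mbe_1 = (0, d, 0, \ldots, 0)$ (bad only when $d = 2$). If every non-$\mbm$ summand in the chosen decomposition is bad, then the decomposition collapses to $\mbf = \mbm + c_1(d, 0, \ldots, 0)$ when $d > 2$, or $\mbf = \mbm + c_1(d, 0, \ldots, 0) + c_2(0, d, 0, \ldots, 0)$ when $d = 2$, which evaluate to $(d(c_1+1) - 1, 1, 0, \ldots, 0)$ or $(2c_1+1, 2c_2+1, 0, \ldots, 0)$ respectively, exactly the listed exceptional vectors, contradicting the hypothesis on $\mbf$.

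The main obstacle will be the perturbation case analysis at the $k = 1$ step. Because $\mbm$ has support only in coordinates $1$ and $2$, the admissible $(i,j)$-perturbations of $\mbm$ are forced to take $j \in \{1, 2\}$, and this restriction is precisely what creates the two bad partner types. Certain borderline cases, such as $\mbe_1 = (d-1, 0, \ldots, 0, 1, 0, \ldots, 0)$, have every perturbation-based rewrite returning $\mbm$ on the other side and must be handled by producing an alternative direct decomposition such as $\mbm + \mbe_1 = (d, 0, \ldots, 0) + (d-2, 1, 0, \ldots, 0, 1, 0, \ldots, 0)$, in which both summands stay in $T_{\mbm}$ for $d \ge 2$. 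Carefully enumerating the supports of $\mbe_1$ and ruling out these borderline failures is the technical core of the argument.
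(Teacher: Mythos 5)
Your proposal is correct, and it reaches the result by a genuinely different route than the paper. The paper first settles the case $n=2$ directly (a parity argument for $d=2$; a reduction mod $d$ for $d>2$) and then handles $n>2$ by induction on the degree $t$ where $|\mbe|=td$, rewriting at each stage with $(i,j)$-perturbations. You instead run a minimal-counterexample exchange argument: among all expressions of $\mbf$ as a sum of vectors of $T$, minimize the multiplicity $k$ of $\mbm$; the identity $2\mbm=(d,0,\ldots,0)+(d-2,2,0,\ldots,0)$ (valid in $T_\mbm$ for all $d\ge 2$) kills $k\ge 2$, and the case $k=1$ reduces to classifying the ``bad partners'' $\mbe_1\in T_\mbm$ for which $\mbm+\mbe_1$ is not a sum of two elements of $T_\mbm$. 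I checked that classification: for $e_1\ge 1$ the pair $(d,0,\ldots,0)$, $(e_1-1,e_2+1,e_3,\ldots,e_n)$ works unless $\mbe_1=(d,0,\ldots,0)$; for $e_1=0$ with some $e_i\ge 1$, $i\ge 3$, the pair $(d-1,0,\ldots,1_i,\ldots,0)$, $(0,e_2+1,\ldots,e_i-1,\ldots,e_n)$ works; and the remaining vector $(0,d,0,\ldots,0)$ admits $(1,d-1,0,\ldots,0)+(d-2,2,0,\ldots,0)$ when $d>2$ and is genuinely bad only when $d=2$ --- exactly your list, with your borderline case $(d-1,0,\ldots,1_i,\ldots,0)$ handled by the direct split you give. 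The collapse when all partners are bad then produces precisely the exceptional vectors, and your containment in the other direction (support forces all summands into the $x_1x_2$-plane, then a second-coordinate or parity obstruction) is the same in spirit as the paper's $n=2$ analysis. What your approach buys is uniformity: one argument covers all $n$ and both $d=2$ and $d>2$, and it concentrates the entire difficulty into a single degree-$2d$ computation (the bad-partner list), whereas the paper's induction on $t$ and separate $n=2$ treatment are more hands-on but require several parallel case analyses. The cost is that the bad-partner classification must be enumerated completely --- as you note, that is the technical core --- but the enumeration closes without surprises, so there is no gap.
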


\begin{proof}
First, suppose $n=2$. If $d=2$, then $T_\mbm=\{(2,0),(0,2)\}$ thus $\mca_{2,2,(1,1)}$ consists of exactly the ordered pairs $(i,j)$ so that $i$ and $j$ are even, showing the result in this case. 

If $d>2$, then no vector $(d-j,j)$ of $T_{\mbm}$ has $j=1$. Consequently, all the vectors $(i,j)$ in $\mca_{2,d,\mbm}$ have $j\neq 1$, and so $\{(ds-1,1) \mid s \ge 1\} \subset \mca_{2,d}\setminus \mca_{2,d,\mbm}$. We now wish to show that if $j \neq 1$, then $\mbe = (i,j)$ is in $\mca_{2,d,\mbm}$. To see this, let $|\mbe|=td$ with $t>1$, and reduce $(i,j)$ mod $d$; if $i = ad+u$ and $j=bd+v$ with $0\le u,v\le d$ and $v \neq 1$, we can re-write $\mbe = a(d,0)+b(0,d)+(u,v)$, so $\mbe \in \mca_{2,d,\mbm}$. If $(u,v)=(d-1,1)$, notice we must have $b>0$. Then, write $\mbe = (ad+1,bd-1)+(d-2,2)$, and the previous case applies to $(ad+1,bd-1)$ since $bd-1 \neq 1$ mod $d$ when $d>2$.

This demonstrates all possible behavior in the plane. If $n>2$, the same arguments above work in the $x_1x_2$-plane to show that $\mca_{n,d}\setminus \mca_{n,d,\mbm}$ contains the given sets, and we will show that no other vectors of $\mca_{n,d}$ are missing from $\mca_{n,d,\mbm}$. 

If $d=2$, let $\mbe =(e_1,\ldots,e_n)\in \mca_{n,2}$ with $\mbe \neq (2s+1,2t+1,0,\ldots,0)$ for any $s,t \in \mbn$. We may assume $\mbe$ has $e_1$, $e_2$, and $e_i$ nonzero for some $i>2$. If both of $e_1$ and $e_2$ are even, the result is trivial. If $e_1$ is odd but $e_2$ is even, then we can re-write $\mbe$ using the $(1,i)$-perturbation to apply the previous case, and similarly if $e_2$ is odd but $e_1$ is even. So now we suppose $e_1$ and $e_2$ are odd, and if $e_i >1$ then we can use both the $(1,i)$- and $(2,i)$-perturbations to re-write $\mbe$ as a sum of vectors in $\mca_{n,2,\mbm}$. If $e_i=1$, then since $e_1,e_2$, and $e_i$ are all odd we must have a fourth nonzero entry $e_j$, and we can use both $(1,i)$- and $(2,j)$-perturbations to end up in previous cases, completing the proof when $d=2$.

If $d>2$, let $\mbe=(e_1,\ldots,e_n)\in \mca_{n,d}$ with $\mbe \neq (ds-1,1,0,\ldots,0)$ for any $s \in \mbn$. Suppose $|\mbe|=td$, and we will induce on $t$. When $t=1$, $\mbe \in T_\mbm$ and there is nothing to show. If $t=2$ and $e_i=0$ for $i>2$, $\mbe$ lies on the $x_1x_2$-plane of $\mbn^n$ and we can rely on the $n=2$ case to show $\mbe \in \mca_{n,d,\mbm}$. If $e_i \neq 0$ for some $i>2$, consider the possible values of $e_1$. If $e_1=d-1$, then we can re-write $\mbe$ as:\[
(d-1,e_2,\ldots,e_i,\ldots,e_n) = (d-1,0,\ldots,1,\ldots,0) + (0,e_2,\ldots,e_i-1,\ldots,e_n) \] and if $e_1\ge d$ then instead we can use: \[
(e_1,e_2,\ldots,e_i,\ldots,e_n)=(d,0,\ldots,0)+(e_1-d,e_2,\ldots,e_i,\ldots,e_n),
\] and in either case all of the vectors on the right hand side of the equations above are in $T_\mbm$. 

If $t>2$, then write $\mbe = \mbm + \mbf$, where $\mbf = (f_1,\ldots,f_n)$ must have $f_i>0$ for some $i>1$. If $i=2$, then we can re-write $\mbe=\mbm'+\mbf'$, where $\mbm'$ is the $(2,1)$-perturbation of $\mbm$ and $\mbf'$ is the $(1,2)$-perturbation of $\mbf$, unless $\mbf'=((t-1)d-1,1,0,\ldots,0)$. In this case, we have $f_1=(t-1)d-2$ and $f_2=2$, so instead we re-write $\mbe=\mbm''+\mbf''$ where $\mbm''$ is the $(1,2)$-perturbation of $\mbm$ and $\mbf''$ is the $(2,1)$-perturbation of $\mbf''$. In either case, by induction  $\mbf'$ (or $\mbf''$, as necessary) is in $\mca_{n,d,\mbm}$, and $\mbm'$ (or $\mbm''$) is in $T_\mbm$. If $i>2$, then we can similarly re-write using the $(i,2)$-perturbation of $\mbm$ and the $(2,i)$-perturbation $\mbf'$ of $\mbf$, unless $\mbf' = ((t-1)d-1,1,\ldots,0)$ and instead we can use the $(1,2)$-perturbation of $\mbm$ and the $(2,1)$-perturbation of $\mbf$. In either of these cases the result lives in $\mca_{n,d,\mbm}$. By induction, the proof is complete.
\end{proof}

These combinatorial results allow us to explicitly describe the cokernel of $\mcp_{n,d,\mbm}\rightarrow V_{n,d}$ whenever $\max(\mbm)<d$.

\begin{scor}\label{cor: cokernel is principal}
If $\max{\mbm}<d$, the cokernel $C$ of the inclusion of the pinched Veronese ring $P=\mcp_{n,d,\mbm}$ in its normalization $V=V_{n,d}$ is principally generated as a $P$-module by $x_1^{m_1}\cdots x_n^{m_n} + P$.
\end{scor}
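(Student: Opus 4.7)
The plan is to read off a $k$-basis of the cokernel $C$ directly from the semigroup set difference $\mca_{n,d}\setminus \mca_{n,d,\mbm}$, since both $V_{n,d}$ and $P = \mcp_{n,d,\mbm}$ have monomial $k$-bases indexed by their defining semigroups. Thus as a $k$-vector space, $C$ has basis $\{x^\mba + P \mid \mba \in \mca_{n,d}\setminus \mca_{n,d,\mbm}\}$. To prove cyclic generation as a $P$-module, I would exhibit, for each such $\mba$, an explicit element $p_\mba \in P$ with $p_\mba \cdot x^\mbm = x^\mba$, so that $x^\mba + P \in P\cdot(x^\mbm + P)$.

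First, when $\max(\mbm) < d-1$, \Cref{lem: combinatorial cokernel max m < d-1 case} gives $\mca_{n,d}\setminus \mca_{n,d,\mbm} = \{\mbm\}$, so $C$ is one-dimensional over $k$ and spanned by $x^\mbm+P$, with nothing further to check. For $\max(\mbm)=d-1$, I may assume without loss of generality that $\mbm=(d-1,1,0,\ldots,0)$, and I would apply \Cref{lem: combinatorial cokernel max m = d-1 case} which splits into two subcases.

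In the subcase $d=2$, the missing exponents are $(2s+1,2t+1,0,\ldots,0)$ with $s,t\ge 0$, and I would write
\[
x_1^{2s+1}x_2^{2t+1} \;=\; (x_1^2)^s\,(x_2^2)^t\cdot x_1 x_2,
\]
using that $x_1^2,x_2^2 \in T_\mbm \subset P$, so $(x_1^2)^s(x_2^2)^t \in P$. In the subcase $d>2$, the missing exponents are $(ds-1,1,0,\ldots,0)$ with $s\ge 1$, and I would write
\[
x_1^{ds-1}x_2 \;=\; (x_1^d)^{s-1}\cdot x_1^{d-1}x_2,
\]
using that $x_1^d \in T_\mbm \subset P$, hence $(x_1^d)^{s-1}\in P$.

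The core difficulty of the statement is already absorbed in the two combinatorial lemmas preceding it; once the set difference $\mca_{n,d}\setminus \mca_{n,d,\mbm}$ is explicit, the multiplier $p_\mba$ is essentially forced. The only real care needed is the bookkeeping in the $\max(\mbm)=d-1$ case, to confirm that the extracted factors $(x_1^2)^s(x_2^2)^t$ and $(x_1^d)^{s-1}$ genuinely lie in $P$ in the boundary cases $s=0$ or $s=1$ (where one of the factors is simply $1\in P$), which poses no issue.
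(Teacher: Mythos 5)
Your proposal is correct and follows essentially the same route as the paper: reading off the monomial $k$-basis of $C$ from the set difference computed in \Cref{lem: combinatorial cokernel max m < d-1 case} and \Cref{lem: combinatorial cokernel max m = d-1 case}, and then exhibiting the same explicit multipliers $(x_1^d)^{s-1}$ and $(x_1^{2s}x_2^{2t})$ to reduce every missing monomial to $x^{\mathbf{m}}+P$. Nothing is missing.
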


\begin{proof}
If $\max(\mbm)<d-1$, certainly $y=x_1^{m_1}\cdots x_n^{m_n}$ is in $V\setminus P$ so forms a nonzero element in $C$. However, by \Cref{lem: combinatorial cokernel max m < d-1 case} we evidently have any nonzero element of $C$ is simply $\lambda y + P$ for any $\lambda \in k$, which shows $C$ is the $P$-module generated by $y+P$.

If $\max(\mbm)=d-1$, assume without loss of generality that $\mbm=(d-1,1,0,\ldots,0)$. If $d>2$, by \Cref{lem: combinatorial cokernel max m = d-1 case} any nonzero element of $C$ is a $k$-linear sum of monomials of the form $x_1^{ds-1}x_2 + P$ for some $s \ge 1$. However, $x_1^{ds-1}x_2 + P = (x_1^d)^{s-1}(x_1^{d-1}x_2+P)$, and so any element of $C$ can be written as $r(x_1^{d-1}x_2+P)$ for some $r \in P$. 

Similarly, if $\max(\mbm)=1$ and $d=2$, then any element of $C$ is a $k$-linear sum of monomials of the form $x_1^{2s+1}x_2^{2t+1}+P$ for some $s\in \mbn$ and $t \in \mbn$. However, $x_1^{2s+1}x_2^{2t+1}+P = (x_1^{2s}x_2^{2t})(x_1x_2+P)$, and so any element of $C$ can be written as $r(x_1x_2+P)$ for some $r \in P$.
\end{proof}

\section{The Cohen-Macaulay and Gorenstein properties}

Our aim in this section is to prove \Cref{thm A}.

\addtocounter{theorem}{-2}

\begin{theorem}
The pinched Veronese ring $\mathcal{P}_{n,d,\mathbf{m}}$ is Cohen-Macaulay if and only if one of the following three conditions hold.
\begin{itemize}
    \item $\max(\mathbf{m}) = d$.
    \item $n=2$ and $\max(\mathbf{m})= d-1$.
    \item $n=3$, $d=2$, and $\max(\mathbf{m}) = 1$.
\end{itemize}
Further, when $\max(\mathbf{m})= d-1$, $\mathcal{P}_{2,d,\mathbf{m}}$ is a Gorenstein ring with $a$-invariant zero, and when $\max(\mathbf{m}) = 1$, $\mathcal{P}_{3,2,\mathbf{m}}$ is a complete intersection ring.
\end{theorem}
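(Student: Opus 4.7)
The plan is to exploit the short exact sequence of $P$-modules
\[0 \to P \to V \to C \to 0\]
from \Cref{thm:normalization of PV rings} and \Cref{cor: cokernel is principal}, where $P=\mcp_{n,d,\mbm}$ and $V=V_{n,d}$. Since $V$ is a direct summand of the polynomial ring and is therefore Cohen-Macaulay of dimension $n$, $H^j_\mfm(V)=0$ for $j<n$, and the induced long exact sequence in local cohomology yields isomorphisms $H^j_\mfm(P) \simeq H^{j-1}_\mfm(C)$ for $1\le j\le n-1$. Cohen-Macaulayness of $P$ is therefore equivalent to $\depth_P C \ge n-1$, and the argument reduces to pinning down the $P$-module structure of $C$ in each case using the combinatorial lemmas from Section 2.

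When $\max(\mbm)=d$, $P$ is already normal by \Cref{thm:normalization of PV rings}, hence Cohen-Macaulay by Hochster's theorem. Otherwise \Cref{cor: cokernel is principal} gives a single generator for $C$. If $\max(\mbm)<d-1$, then \Cref{lem: combinatorial cokernel max m < d-1 case} shows $C\cong k$ as a $P$-module, so $H^1_\mfm(P)\ne 0$ and $P$ is not Cohen-Macaulay. If $\max(\mbm)=d-1$ with $d>2$ and $\mbm=(d-1,1,0,\ldots,0)$, applying \Cref{lem: combinatorial cokernel max m = d-1 case} shows that the only monomials of $P$ acting nontrivially on the generator of $C$ are the powers of $x_1^d$, giving $C\cong k[x_1^d](-1)$, which is Cohen-Macaulay of $P$-dimension one; hence $P$ is Cohen-Macaulay iff $n=2$. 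If $\max(\mbm)=1$ with $d=2$ and $\mbm=(1,1,0,\ldots,0)$, the analogous calculation gives $C\cong k[x_1^2,x_2^2](-1)$, Cohen-Macaulay of $P$-dimension two, so $P$ is Cohen-Macaulay iff $n\le 3$ (with $n=2$ giving the polynomial ring $k[x^2,y^2]$ already handled by \Cref{thm:normalization of PV rings}).

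For the Gorenstein claim when $n=2$, I would compute the Hilbert series directly from \Cref{lem: combinatorial cokernel max m = d-1 case}: exactly one monomial is missing from $[V_{2,d}]_k$ in each positive Veronese degree $k$, giving
\[\on{Hilb}_P(T) = \frac{1+(d-2)T+T^2}{(1-T)^2},\]
whose symmetric $h$-vector $(1,d-2,1)$ together with Cohen-Macaulayness implies Gorenstein with $a$-invariant $2-2=0$ by Stanley's criterion. For the complete intersection claim when $n=3,d=2,\mbm=(1,1,0)$, direct inspection gives $P=k[x^2,y^2,z^2,xz,yz]$, and the obvious surjection $k[u_1,\ldots,u_5]/(u_4^2-u_1u_3,\,u_5^2-u_2u_3) \twoheadrightarrow P$ is between three-dimensional domains and therefore an isomorphism; since the two quadrics form a regular sequence, $P$ is a complete intersection.

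The main obstacle throughout is the careful identification of the $P$-module structure on $C$: one must rigorously show, using the combinatorial lemmas, that the monomials of $P$ acting nontrivially on the generator of $C$ are exactly those in the expected polynomial subring, which demands that every perturbation-type argument of Section 2 be tracked at the level of the annihilator. A secondary difficulty is ruling out hidden syzygies in the complete-intersection presentation, which I would handle by a Krull dimension and irreducibility comparison rather than by any explicit Gr\"obner basis analysis.
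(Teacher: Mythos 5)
Your argument is correct, and its backbone is the same as the paper's: the paper's proof of the Cohen--Macaulay classification (\Cref{lem: depth of pinched veroneses}) uses exactly your short exact sequence $0\to P\to V\to C\to 0$ and reads off $\depth(P)$ from the local cohomology of the cokernel, with \Cref{lem: combinatorial cokernel max m < d-1 case} and \Cref{lem: combinatorial cokernel max m = d-1 case} supplying the module structure of $C$; your explicit identifications $C\simeq k$, $C\simeq k[x_1^d]\cdot(x_1^{d-1}x_2)$, and $C\simeq k[x_1^2,x_2^2]\cdot(x_1x_2)$ are just a sharper packaging of the paper's statements that $x_1^d$ (resp.\ $x_1^2,x_2^2$) is a maximal regular sequence on $C$, and the annihilator verification you flag is the same short monomial check the paper leaves implicit. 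Where you genuinely diverge is the Gorenstein statement: the paper (\Cref{thm:max = d-1 is gorenstein}) cuts down by the regular sequence $x^d,y^d$, lists a $k$-basis of the Artinian reduction, and exhibits a one-dimensional socle generated by $x^{d-1}y^{d+1}$, then reads the $a$-invariant off the top-degree class $[x^{d-1}y^{d+1}/x^dy^d]$; you instead compute the Hilbert series and invoke Stanley's symmetry criterion for Cohen--Macaulay graded domains, getting the $a$-invariant as $\deg$(numerator)$-\dim$. Both are legitimate; the socle computation is self-contained and also hands you the canonical module degree directly, while your route is shorter but imports Stanley's theorem and requires the standard-graded (Veronese-normalized) domain hypotheses, which do hold here. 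One small caveat: your claim that exactly one monomial is missing in each positive degree, and hence the displayed Hilbert series $\bigl(1+(d-2)T+T^2\bigr)/(1-T)^2$, uses the $d>2$ branch of \Cref{lem: combinatorial cokernel max m = d-1 case}; for $d=2$, $n=2$ the ring is $k[x^2,y^2]$ and should be treated separately (as the paper also implicitly does, since the $a$-invariant-zero statement really pertains to $d>2$). The complete intersection argument is the same as the paper's two-quadric presentation, and in both treatments the key unproved point is that the two quadrics generate a prime (or at least the full) defining ideal; your fallback of a dimension-plus-irreducibility comparison matches the paper, and a Hilbert series comparison would also close it cleanly.
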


We recover the result of Greco and Martino in \cite[Theorem~B]{GM} by using purely semigroup techniques and correct an omission of a class of Cohen-Macaulay rings. We prove \Cref{thm A} by studying the possible depths of the pinched Veronese rings using the combinatorial results from Section $2$.

\begin{slem} \label{lem: depth of pinched veroneses}
The depth of the pinched Veronese ring $\mathcal{P}_{n,d,\mathbf{m}}$ is as follows.
\begin{enumerate}
    \item If $\max(\mathbf{m}) = d$ then $\depth(\mathcal{P}_{n,d,\mathbf{m}}) = n$.
    \item If $n>2$, $d=2$, and $\max(\mathbf{m}) = 1$, then $\depth(\mathcal{P}_{n,d,\mathbf{m}}) = 3$.
    \item If $\max(\mathbf{m}) = d-1$, then $\depth(\mathcal{P}_{n,d,\mathbf{m}}) = 2$. Also, $\depth(\mathcal{P}_{2,2,\mathbf{m}}) = 2$.
    \item If $\max(\mathbf{m}) < d-1$ then $\depth(\mathcal{P}_{n,d,\mathbf{m}}) = 1$.
\end{enumerate}
\end{slem}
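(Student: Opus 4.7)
The plan is to handle case $(1)$ directly via Hochster's theorem, to dispose of the special subcase $n = d = 2$, $\mbm = (1,1)$ appearing in $(3)$ by noting that $\mcp_{2,2,(1,1)} = k[x^2, y^2]$ is a polynomial ring of dimension $2$, and in the remaining cases $(2)$--$(4)$ to reduce the depth computation for $P := \mcp_{n,d,\mbm}$ to a computation on the cokernel $C := V/P$ of the normalization map. For case $(1)$, \Cref{thm:normalization of PV rings} identifies $P$ with its normalization, which is a normal affine semigroup ring, so Hochster's theorem gives $\depth P = \dim P = n$.

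For $\max(\mbm) < d$ with $(n,d,\mbm) \neq (2, 2, (1,1))$, set $V := V_{n,d}$; by \Cref{thm:normalization of PV rings} this is the normalization of $P$. Since $V$ is a maximal Cohen-Macaulay $P$-module of dimension $n$, we have $H^j_\mfm(V) = 0$ for $j < n$, where $\mfm$ denotes the homogeneous maximal ideal of $P$. The long exact sequence of local cohomology attached to $0 \to P \to V \to C \to 0$ then yields $H^j_\mfm(P) \cong H^{j-1}_\mfm(C)$ for $1 \le j \le n-1$, together with an injection $H^{n-1}_\mfm(C) \hookrightarrow H^n_\mfm(P)$. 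Since $P$ is an $n$-dimensional domain, $H^0_\mfm(P) = 0$ and $H^n_\mfm(P) \ne 0$, from which I would deduce
\[
\depth P = \depth_P C + 1.
\]

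The remaining work is to compute $\depth_P C$ in each case from the explicit descriptions of $C$ furnished by \Cref{lem: combinatorial cokernel max m < d-1 case}, \Cref{lem: combinatorial cokernel max m = d-1 case}, and \Cref{cor: cokernel is principal}. In case $(4)$, $\mca_{n,d} \sm \mca_{n,d,\mbm} = \{\mbm\}$, so $C$ is one-dimensional over $k$ with $\mfm C = 0$; hence $\depth_P C = 0$ and $\depth P = 1$. In case $(3)$ with $d > 2$, taking $\mbm = (d-1, 1, 0, \ldots, 0)$ in descending order, $C$ has $k$-basis $\{x_1^{sd-1}x_2 + P : s \ge 1\}$, on which multiplication by $x_1^d \in \mfm$ acts as the forward shift $s \mapsto s+1$; this shows $x_1^d$ is a nonzerodivisor on $C$ with $C/x_1^d C \cong k$, so $\depth_P C = 1$ and $\depth P = 2$. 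In case $(2)$, taking $\mbm = (1, 1, 0, \ldots, 0)$, $C$ has $k$-basis $\{x_1^{2s+1}x_2^{2t+1} + P : s, t \ge 0\}$, and the pair $x_1^2, x_2^2 \in \mfm$ should form a regular sequence on $C$ with $C/(x_1^2, x_2^2)C \cong k$, giving $\depth_P C = 2$ and $\depth P = 3$. The main obstacle is confirming the regular sequence in case $(2)$: while $x_1^2$ is clearly injective on $C$ as it shifts the $s$-index, the delicate step is that multiplication by $x_2^2$ remains injective on $C/x_1^2 C$, which reduces to observing that $C/x_1^2 C$ has basis $\{x_1 x_2^{2t+1} + x_1^2 C : t \ge 0\}$ (since quotienting by $x_1^2 C$ kills exactly those basis elements of $C$ with $x_1$-exponent $\ge 3$) and that $x_2^2$ shifts this basis by $t \mapsto t+1$.
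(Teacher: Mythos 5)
Your proposal is correct and follows essentially the same route as the paper: case (1) via the normalization theorem and Hochster, the special case $\mcp_{2,2,(1,1)}$ directly, and cases (2)--(4) by running the local cohomology long exact sequence of $0\to P\to V_{n,d}\to C\to 0$ and reading off $\depth P$ from $\depth_P C$, which is computed from the combinatorial descriptions of $C$. If anything, your verification is slightly more careful than the paper's write-up of case (3) (you correctly use $x_1^d$ as the maximal regular sequence on $C$ when $d>2$) and your explicit check that $x_1^2,x_2^2$ is a regular sequence on $C$ in case (2) fills in a step the paper only asserts.
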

\begin{proof}
By \Cref{thm:normalization of PV rings}, when $\max(\mathbf{m}) = d$, $P = \mathcal{P}_{n,d,\mathbf{m}}$ is normal. By Hochster's theorem, normal affine semigroups are Cohen-Macaulay. Since $P$ is $n$ dimensional, it follows that the depth of $P$ is also $n$. This proves case $1$. 
    
Let $V= V_{n,d}$ denote the Veronese ring, which is the normalization of $P = \mcp_{n,d,\mbm}$ by \Cref{thm:normalization of PV rings}, and let $\mfm$ denote the homogeneous maximal ideal of $P$. We observe that the inclusion of affine semigroups $\mca_{n,d,\mbm}\subset \mca_{n,d}$ induces the short exact sequence of $P$-modules below.
   \begin{center}
\begin{tikzcd}
0 \arrow{r} & P \arrow{r} & V \arrow{r} & C \arrow{r} & 0
\end{tikzcd}\end{center} 
Let $d$ denote the depth of $P$. Since $V$ is a Cohen-Macaulay ring, applying the local cohomology functor supported at $\mfm$ to the above short exact sequence we get the inclusion $H^{d-1}_{\mfm} (C) \subset H^d_{\mfm} (P)$. 

In case $2$, the cokernel $C$ has depth $2$ since $x_1^2$ and $x_2^2$ form a maximal regular sequence on $C$ by \Cref{lem: combinatorial cokernel max m = d-1 case}. Since the first nonvanishing of the local cohomology module $H^i_{\mfm} (C)$ occurs at the depth of $C$ as a $P$-module, we get $H^2_{\mfm}(C)$ is nonzero. Consequently, $H^3_{\mfm}(P)$ is nonzero and the depth of $P$ is $3$.

In case $3$, for $n=2$, both $x_1^2$ and $x_2^2$ are regular on $C$ by the first part of \Cref{lem: combinatorial cokernel max m = d-1 case} so that $P$ has depth $2$. For $n > 2$, since $x_1^d$ is a maximal regular sequence on $C$ by the second part of \Cref{lem: combinatorial cokernel max m = d-1 case}, so the depth of $P$ is again $2$. Further, $\mcp_{2,2,(1,1)}\simeq k[x^2,y^2]$ is regular and hence depth $2$ as well.

In case $4$, the cokernel $C$ is an Artinian $P$-module by \Cref{lem: combinatorial cokernel max m < d-1 case} so $P$ has depth $1$.
\end{proof}

The lemma above, together with the fact that $\dim(\mcp_{n,d,\mbm})=n$, settles the first part of \Cref{thm A}. 

\begin{srmk}
 The above argument shows that when $\max(\mbm)<d-1$, all the lower local cohomology modules of $\mcp_{n,d,\mbm}$ are finitely generated, and thus of finite length. Therefore $\mcp_{n,d,\mbm}$ is a \textbf{generalized Cohen-Macaulay} ring when $\max(\mbm)<d-1$.
\end{srmk}

When $\max(\mbm) =d$, $\mcp_{n,d,\mathbf{m}}$ is Cohen-Macaulay. In dimension $2$ we can be more explicit.

\begin{srmk}
If $\max(\mbm) =d$, then $\mcp_{2,d,\mathbf{m}}$ is isomorphic to the Veronese ring $V_{2,d-1}$. We assume $\mathbf{m}=(d,0)$ and let $P$ be the sub-semigroup of $\mbz^2$ generated by $\{(d-1,1),(d-2,2),\ldots,(1,d-1),(0,d)\}$. If $\ip{P}$ is the subgroup of generated by $P$ in $\mbz^2$, we see $\ip{P}=\ip{(0,d),(1,-1)}$, and these generators are $\mbz$-linearly independent. Consequently $\ip{P}$ is a free abelian group of rank 2.

 We define a lattice isomorphism $\ip{P}\simeq \mbz^2$ given by $(0,d)\mapsto (1,0)$ and $(1,-1) \mapsto (0,1)$. Under this transformation, the points $ (d-1,1),(d-2,2),\ldots,(0,d)$ map to $(1,d-1),(1,d-2),\ldots,(1,0)$ respectively. We let $Q$ be the sub-semigroup of $\mbz^2$ generated by these points. 

We see that $Q$ is isomorphic as a semigroup to the normal semigroup $\mathcal{A}_{2,d-1}$ and the semigroup isomorphism $\mca_{2,d,\mathbf{m}} \simeq \mathcal{A}_{2,d-1}$ induces a ring isomorphism $P \simeq k[\mathcal{A}_{2,d-1}] = V_{2,d-1}$.
\end{srmk}

We now study the Gorenstein property of pinched Veronese rings. Recall for a graded ring $R$ of dimension $d$ with homogeneous maximal ideal $\mathfrak{m}$, Goto and Watanabe in \cite{GW} define the $a$-invariant of $R$ to be the highest integer $a(R)=a$ such that the grade piece $[H^d_{\mathfrak{m}}(R)]_a$ is nonzero. 

\begin{srmk}
If $\max(\mbm)=d$, then $P = \mcp_{2,d,\mathbf{m}}$ is Gorenstein if and only if $d=2,3$. This is because when $\max(\mbm)=d$, then $P \simeq V_{2,d-1}$ which is a Veronese subring of $S=k[x,y]$. $S$ is a regular (hence Gorenstein) ring with $a(S)=-2$, so by \cite{GW}, Corollary 3.1.5 and Theorem 3.2.1, $V_{2,d-1}=S^{(d-1)}$ is Gorenstein if and only if $d-1\mid 2$. Thus $d=2,3$. 
\end{srmk}

\begin{sthm}\label{thm:max = d-1 is gorenstein}
Let $\max(\mbm)=d-1$. Then $\mcp_{2,d,\mathbf{m}}$ is a Gorenstein ring of $a$-invariant zero. When $\max(\mathbf{m}) = 1$, $\mathcal{P}_{3,2,\mathbf{m}}$ is a complete intersection ring.
\end{sthm}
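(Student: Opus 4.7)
For the Gorenstein claim, the plan is to compute the Hilbert series of $P = \mcp_{2,d,\mbm}$ and invoke Stanley's palindromic-$h$ criterion for Gorensteinness. Without loss of generality take $\mbm = (d-1, 1)$. By \Cref{lem: depth of pinched veroneses}, $P$ is Cohen-Macaulay of dimension $2$, and it is clearly a standard-graded domain (a subring of $k[x,y]$). For $d > 2$, \Cref{lem: combinatorial cokernel max m = d-1 case} guarantees that each graded piece $[P]_s$ with $s \ge 1$ is obtained from $[V_{2,d}]_s$ by removing exactly the monomial $x^{ds-1}y$. Combined with the classical $H_{V_{2,d}}(t) = (1 + (d-1)t)/(1-t)^2$, this yields
$$H_P(t) = \frac{1 + (d-1)t}{(1-t)^2} - \frac{t}{1-t} = \frac{1 + (d-2)t + t^2}{(1-t)^2}.$$
The $h$-polynomial $1 + (d-2)t + t^2$ is palindromic, so Stanley's criterion forces $P$ to be Gorenstein, with $a$-invariant $\deg h - \dim P = 2 - 2 = 0$. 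The degenerate case $d = 2$ reduces to $\mcp_{2,2,(1,1)} \cong k[x^2, y^2]$, a polynomial ring which is trivially Gorenstein.

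For the complete-intersection claim, I would take $\mbm = (1,1,0)$ without loss of generality, so that $P = \mcp_{3,2,\mbm} = k[x^2, xz, y^2, yz, z^2]$. Introduce the surjection $\pi : R := k[A,B,C,D,E] \twoheadrightarrow P$ sending $A \mapsto x^2$, $B \mapsto xz$, $C \mapsto y^2$, $D \mapsto yz$, $E \mapsto z^2$. The identities $(xz)^2 = x^2 z^2$ and $(yz)^2 = y^2 z^2$ put $I := (B^2 - AE,\ D^2 - CE)$ inside $\ker \pi$. To see equality, observe that $B^2 - AE$ is irreducible in $R$ (a monic quadratic in $B$ over $k[A,C,D,E]$ whose discriminant $AE$ is not a square), so $R/(B^2 - AE)$ is a $4$-dimensional domain, in which $D^2 - CE$ is visibly nonzero and hence a non-zero-divisor. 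Thus $I$ is a complete-intersection ideal of height $2$ and $R/I$ is a $3$-dimensional Cohen-Macaulay domain. The induced surjection $R/I \twoheadrightarrow P$ between $3$-dimensional domains must have zero kernel (any nonzero prime in a $3$-dimensional domain cuts the dimension), so $P \cong R/I$ is a complete intersection.

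The principal obstacle for the first half is the invocation of Stanley's palindromic-$h$ criterion, which converts the numerical symmetry of the Hilbert numerator into the Gorenstein conclusion; the Hilbert-series bookkeeping itself is mechanical once \Cref{lem: combinatorial cokernel max m = d-1 case} is in hand. For the second half, the only point of care is the irreducibility of $B^2 - AE$ and the non-zero-divisor verification for $D^2 - CE$ modulo it, both routine.
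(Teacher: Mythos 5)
Your Gorenstein half is correct but takes a genuinely different route from the paper. The paper kills the homogeneous system of parameters $x^d,y^d$ and writes the Artinian reduction $P/(x^d,y^d)$ out explicitly as $k\oplus kx^{d-2}y^2\oplus\cdots\oplus kxy^{d-1}\oplus kx^{d-1}y^{d+1}$, checks that the socle is spanned by $x^{d-1}y^{d+1}$, and reads the $a$-invariant off the top local cohomology class $\left[x^{d-1}y^{d+1}/x^dy^d\right]$. You instead extract the Hilbert series from \Cref{lem: combinatorial cokernel max m = d-1 case} and invoke Stanley's symmetry criterion for graded Cohen--Macaulay domains; this is legitimate ($P$ is a standard graded CM domain in the normalized grading by \Cref{lem: depth of pinched veroneses}), your series computation $H_P(t)=(1+(d-2)t+t^2)/(1-t)^2$ is right, and for a CM graded ring the $a$-invariant is the degree of the Hilbert series as a rational function, giving $a(P)=0$. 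What the paper's computation buys is an explicit socle generator; what yours buys is brevity and the $a$-invariant for free. One caveat you share with the paper: when $d=2$ the ring is $k[x^2,y^2]$, whose $a$-invariant is $-2$, so the ``$a$-invariant zero'' clause really concerns $d>2$ (the paper's socle element $x^{d-1}y^{d+1}$ does not even lie in $P$ when $d=2$); your separate treatment of $d=2$ is consistent with this, but the theorem's $a$-invariant claim should be read as excluding that case.

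Your complete intersection argument uses the same presentation as the paper (the same surjection and the same ideal $I=(B^2-AE,\,D^2-CE)$), but as written it has a gap: from the irreducibility of $B^2-AE$ together with $D^2-CE$ being a nonzerodivisor modulo it you may conclude only that $I$ is a height-two complete intersection, not that $R/I$ is a domain, yet your final step (``a surjection between $3$-dimensional domains has zero kernel'') needs precisely that $R/I$ is a domain --- otherwise $\ker\pi/I$ could be a nonzero minimal prime of $R/I$ containing $I$ properly. (The paper simply asserts that $I$ is prime, so it is no more detailed here, but your phrasing presents the domain property as following from what was shown, and it does not.) The repair is short: check that $CE$ is not a square in $\Frac\bigl(k[A,C,E][B]/(B^2-AE)\bigr)=k(A,C,E)(\sqrt{AE})$, so that $D^2-CE$ stays irreducible there and $R/I$ is a domain; or bypass primality altogether by a Hilbert series count, since $R/I$ is a complete intersection of two quadrics with series $(1+t)^2/(1-t)^3$ and $\dim_k[P]_s=\binom{2s+2}{2}-s=2s^2+2s+1$ agrees with it, forcing the graded surjection $R/I\twoheadrightarrow P$ to be an isomorphism. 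Also replace the ``discriminant'' justification for irreducibility of $B^2-AE$ by the statement that $AE$ is not a square, so the argument covers characteristic $2$.
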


\begin{proof}
Without loss of generality, we may assume that $\mathbf{m} = (d-1,1)$. By \Cref{lem: depth of pinched veroneses}, $P = \mcp_{2,d,\mathbf{m}}$ is Cohen-Macaulay. Thus, the system of parameters $x^d, y^d$ is a regular sequence in $P$. Consequently, it is enough to show that the zero-dimensional ring $R = P/(x^d, y^d)$ is Gorenstein. We claim that:
\[R = k \oplus kx^{d-2}y^2 \oplus \cdots \oplus kx^2y^{d-2} \oplus kxy^{d-1} \oplus kx^{d-1}y^{d+1}.\]
This is because for $i+j \neq d-1$, $(x^iy^{d-i})(x^jy^{d-j})$ lies in the ideal $(x^d,y^d)$, as shown by the computation below.

\begin{align*}
(x^iy^{d-i})(x^jy^{d-j}) = \left\{ \begin{array}{cc} 
                x^d(x^{i+j-d}y^{2d-(i+j)}) & \hspace{5mm} \text{if } i+j \geq d \\
                y^d(x^{i+j}y^{d-(i+j)}) & \hspace{5mm} i+j<d \\
                \end{array} \right.
\end{align*}

Thus the socle of $R$ is singly generated by $x^{d-1}y^{d+1}$. Therefore $P$ is Gorenstein.

Further, notice that the (nonzero) element of maximal degree in the top local cohomology module of $P$ supported at its homogeneous maximal ideal is:
\[
\eta =\left[ \frac{x^{d-1}y^{d+1}}{x^dy^d}\right].
\]
Under the standard grading, the degree of $\eta$ is $(d-1)+(d+1) -2d = 0$. So, the $a$-invariant of $P$ is zero.

Next, it suffices to observe that $P = \mathcal{P}_{3,2,(1,1,0)}$ is a complete intersection ring. Consider the surjection:
\[\phi :R = k [a,b,c,d,e] \rightarrow  P = k[x^2,xz,y^2,yz,z^2]\] given by $a \mapsto x^2$, $\ldots$, $e \mapsto z^2$. 

The ideal $I =(ae-b^2, ce-d^2)$ is clearly contained in the kernel of $\phi$ and defines a prime ideal generated by a regular sequence of length two in the ring $R$. Since $R/I$ is a three dimensional domain, it follows that $\phi$ is an isomorphism of rings so that $P$ is a complete intersection.
\end{proof}

\section{Pinched Veronese Rings in Prime Characteristic}

In this section, we will explore the $F$-singularities of pinched Veronese rings which are of prime characteristic $p>0$. As an application, we will show that all pinched Veronese rings have finite \textit{Frobenius test exponents}, which is an invariant that controls the Frobenius closure of all parameter ideals simultaneously. We again assume that the underlying field $k$ is $F$-finite, which by a theorem of Kunz \cite[Theorem~2.5]{KunzExcellent}, implies any finite type $k$-algebra is excellent.

\subsection{F-nilpotence of Pinched Veroneses}

Our aim in this subsection is to prove \Cref{thm B} by cases on $\max(\mbm)$. 

\begin{theorem}
Let $k$ be a field of characteristic $p>0$. The $F$-singularity type of the pinched Veronese ring $\mcp_{n,d,\mathbf{m}}$ is as follows.
\begin{itemize}
    \item $\mcp_{n,d,\mathbf{m}}$ is $F$-regular for $\max(\mbm) = d$.
    \item $\mcp_{n,d,\mathbf{m}}$ is $F$-nilpotent for $d>2$ and $\max(\mbm) < d$. 
    \item $\mcp_{n,2,\mbm}$ is $F$-nilpotent if $p=2$ and $F$-injective if $p>2$. Further, $\mcp_{3,2,\mbm}$ is $F$-pure if $\max(\mbm)=1$ and $p>2$.
\end{itemize}
\end{theorem}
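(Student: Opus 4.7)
The plan is to handle the three bullets separately, using the short exact sequence $0 \to P \to V \to C \to 0$ from Section~2 (with $P = \mcp_{n,d,\mbm}$, $V = V_{n,d}$, and $C$ the cokernel) as the central tool throughout. The first bullet is immediate from \Cref{thm:normalization of PV rings}: when $\max(\mbm) = d$, $P$ equals its own normalization, and normal affine semigroup rings over an $F$-finite field of positive characteristic are strongly $F$-regular by a classical result.

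For the second bullet I would apply \Cref{lem: nilpotent cokernel F-rational} to the inclusion $P \hookrightarrow V$. Since $V$ is $F$-regular (hence $F$-rational) as a direct summand of the polynomial ring $S$, it suffices to check that the induced Frobenius $\overline{F}\colon C \to C$ is nilpotent. By \Cref{cor: cokernel is principal}, $C$ is principally generated over $P$ by $y + P$ where $y = x_1^{m_1}\cdots x_n^{m_n}$; by $p$-linearity, nilpotence of $\overline{F}$ reduces to showing $y^p \in P$, i.e., $p\mbm \in \mca_{n,d,\mbm}$. When $\max(\mbm) < d - 1$, \Cref{lem: combinatorial cokernel max m < d-1 case} says the only missing point is $\mbm$ itself, and $p\mbm \ne \mbm$ since $p \ge 2$. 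When $\max(\mbm) = d - 1$ and $d > 2$ (so WLOG $\mbm = (d-1, 1, 0, \ldots, 0)$), \Cref{lem: combinatorial cokernel max m = d-1 case} says the missing vectors all have second coordinate $1$, while $p\mbm$ has second coordinate $p \ge 2$. Either way $\overline{F} = 0$ on $C$, yielding $F$-nilpotence.

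For the third bullet we may assume $\mbm = (1, 1, 0, \ldots, 0)$ (the $\max(\mbm) = 2$ subcase is covered by bullet one); the $n = 2$ case yields the regular ring $k[x^2, y^2]$, which trivially satisfies the conclusion. For $n \ge 3$ and $p = 2$, the argument above still applies since $(xy)^2 = (x^2)(y^2) \in P$, giving $\overline{F}(xy + P) = 0$ and $F$-nilpotence via \Cref{lem: nilpotent cokernel F-rational}. For $n \ge 3$ and $p > 2$, however, $(xy)^p$ has both exponents odd, so $p\mbm$ lies in the bad set of \Cref{lem: combinatorial cokernel max m = d-1 case} and $\overline{F}$ is not nilpotent; instead, I would establish $F$-injectivity via the long exact sequence on local cohomology. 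Using \Cref{lem: combinatorial cokernel max m = d-1 case}, one identifies $C$ as a $P$-module with $k[u, w] \cdot (xy + P)$, where $u = x_1^2, w = x_2^2$ and the $P$-action factors through the natural quotient $P \twoheadrightarrow k[u, w]$; in particular $C$ is Cohen--Macaulay of dimension $2$ with $H^j_\mfm(C) = 0$ for $j \ne 2$. The long exact sequence, combined with the Cohen--Macaulayness of $V$, then gives $H^j_\mfm(P) = 0$ for $j \le 2$, either a short exact sequence $0 \to H^2_\mfm(C) \to H^3_\mfm(P) \to H^3_\mfm(V) \to 0$ (when $n = 3$) or an isomorphism $H^3_\mfm(P) \cong H^2_\mfm(C)$ (when $n \ge 4$), and isomorphisms $H^j_\mfm(P) \cong H^j_\mfm(V)$ for $3 < j \le n$. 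Since $V$ is $F$-injective, the $F$-injectivity of $P$ reduces entirely to showing that Frobenius acts injectively on $H^2_\mfm(C)$. In the identification $C \cong k[u, w]$, the Frobenius becomes the $p$-linear map $g \mapsto u^{(p-1)/2} w^{(p-1)/2} g^p$ (because $(xy)^p = u^{(p-1)/2} w^{(p-1)/2} \cdot xy$ in $V$), which on the $k$-basis $\{[1/(u^a w^b)] : a, b \ge 1\}$ of $H^2_{(u, w)}(k[u, w])$ sends $[1/(u^a w^b)]$ to $[1/(u^{ap - (p-1)/2} w^{bp - (p-1)/2})]$; this is injective because the index map is one-to-one and $p$-th powers are injective on the field $k$.

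Finally, for the $F$-purity of $\mcp_{3, 2, (1, 1, 0)}$ when $p > 2$, I would apply Fedder's criterion to the complete intersection presentation $P \cong k[a, b, c, d, e]/(ae - b^2, ce - d^2)$ from \Cref{thm:max = d-1 is gorenstein}: expanding $\left( (ae - b^2)(ce - d^2) \right)^{p-1}$ via the binomial theorem, the requirement that every variable exponent be strictly less than $p$ forces both binomial indices to $(p-1)/2$, producing the surviving term $\binom{p-1}{(p-1)/2}^2\, a^{(p-1)/2} b^{p-1} c^{(p-1)/2} d^{p-1}$, whose coefficient is nonzero modulo $p$ since $\binom{p-1}{r} \equiv (-1)^r \pmod{p}$. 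The main obstacle throughout is the $n \ge 3, d = 2, p > 2$ subcase of bullet three, where $\overline{F}$ on $C$ is not nilpotent and one must carefully compute the induced Frobenius on the top local cohomology of the auxiliary polynomial ring $k[u, w]$.
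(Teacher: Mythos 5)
Your proposal is correct, and its overall skeleton --- the short exact sequence $0 \to P \to V \to C \to 0$, \Cref{lem: nilpotent cokernel F-rational} together with \Cref{lem: combinatorial cokernel max m < d-1 case} and \Cref{lem: combinatorial cokernel max m = d-1 case} for the nilpotent cases, and the long exact sequence of local cohomology for $F$-injectivity --- is the same as the paper's; the genuine differences are in the two hardest sub-arguments. For injectivity of Frobenius on $H^2_{\mfm}(C)$ when $d=2$ and $p>2$, the paper keeps $C$ as a $P$-module and invokes the Nagel--Schenzel isomorphism $H^2_\mfm(C)\simeq H^0_\mfm(H^2_I(C))$ with $I=(x_1^2,x_2^2)$, checking injectivity on the direct limit of the modules $C/(x_1^{2i},x_2^{2i})C$; you instead identify $C\cong k[u,w]$ (with $u=x_1^2$, $w=x_2^2$, all other algebra generators of $P$ annihilating $C$) carrying the twisted Frobenius $g\mapsto u^{(p-1)/2}w^{(p-1)/2}g^p$, and compute directly on the basis $[1/(u^aw^b)]$ of $H^2_{(u,w)}(k[u,w])$. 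This is a legitimate and arguably cleaner route to the same fact: it sidesteps Nagel--Schenzel and filter-regular sequences at the cost of justifying the module identification, which is valid since $C\cong P/\operatorname{Ann}_P(x_1x_2+P)$ and the only monomials of $P$ not annihilating the generator are the $x_1^{2a}x_2^{2b}$. For the $F$-purity of $\mcp_{3,2,(1,1,0)}$, the paper combines the Gorenstein property from \Cref{thm:max = d-1 is gorenstein} with the $F$-injectivity just proved and cites Fedder's result that Gorenstein $F$-injective rings are $F$-pure; you instead run Fedder's criterion directly on the presentation $k[a,b,c,d,e]/(ae-b^2,ce-d^2)$, which is self-contained and does not even need $F$-injectivity (minor slip: the surviving monomial is $a^{(p-1)/2}b^{p-1}c^{(p-1)/2}d^{p-1}e^{p-1}$ --- you dropped the factor $e^{p-1}$ --- but all exponents remain $<p$ and the coefficient $\binom{p-1}{(p-1)/2}^2$ is a unit mod $p$, so the conclusion stands). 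Finally, your reduction of nilpotence of $\overline{F}$ on $C$ to the single membership $p\mbm\in\mca_{n,d,\mbm}$ via \Cref{cor: cokernel is principal} matches the paper's verification; the paper argues by degrees when $\max(\mbm)<d-1$, which is equivalent.
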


\begin{srmk}
If $\max(\mbm)=d$, then $\mcp_{n,d,\mbm}$ is a direct summand of a polynomial ring, and is hence $F$-regular, as noted before.
\end{srmk}

When $\max(\mbm)<d$, we will use \Cref{lem: nilpotent cokernel F-rational} to conclude that $P_{n,d,\mbm}$ is $F$-nilpotent. 

\begin{sthm}
If $d>2$ and $\max(\mbm)<d$, then the pinched Veronese ring $\mcp_{n,d,\mbm}$ is $F$-nilpotent. 
\end{sthm}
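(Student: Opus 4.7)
The plan is to apply \Cref{lem: nilpotent cokernel F-rational} to the natural inclusion $P := \mcp_{n,d,\mbm} \hookrightarrow V := V_{n,d}$ of the pinched Veronese ring into its normalization, established in \Cref{thm:normalization of PV rings}. Since $V$ is a direct summand of the polynomial ring $S$, it is $F$-regular, hence $F$-rational, so one hypothesis of the lemma is already in hand. What remains is to check that the induced Frobenius action $\overline{F}\colon C \to C$ on the cokernel $C$ of this inclusion is nilpotent.

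For that I will lean on \Cref{cor: cokernel is principal}, which tells us that $C$ is cyclic as a $P$-module with a single generator $y$: namely $y := x^{\mbm} + P$ when $\max(\mbm) < d-1$, and $y := x_1^{d-1} x_2 + P$ when $\max(\mbm) = d-1$ (after reordering so that $\mbm = (d-1,1,0,\ldots,0)$). Because $\overline{F}$ is $p$-linear, showing $\overline{F}(y) = 0$ forces $\overline{F}(ry) = r^p\overline{F}(y) = 0$ for every $r \in P$, hence $\overline{F} \equiv 0$ on $C$. So it suffices to verify a single monomial computation in each subcase.

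When $\max(\mbm) < d-1$, the element $y^p = x^{p\mbm}$ has exponent vector $p\mbm \in \mca_{n,d}$; by \Cref{lem: combinatorial cokernel max m < d-1 case} the only vector of $\mca_{n,d}$ outside $\mca_{n,d,\mbm}$ is $\mbm$ itself, and $p\mbm \neq \mbm$ since $p \geq 2$, so $y^p \in P$. When $\max(\mbm) = d-1$ and $d > 2$, the element $y^p$ has exponent vector $(pd-p, p, 0, \ldots, 0)$; by \Cref{lem: combinatorial cokernel max m = d-1 case} the vectors of $\mca_{n,d}$ missing from $\mca_{n,d,\mbm}$ are precisely those of the form $(ds-1,1,0,\ldots,0)$ with $s \geq 1$, and since our second coordinate is $p \geq 2$, not $1$, this vector lies in $\mca_{n,d,\mbm}$, giving $y^p \in P$.

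In both subcases $\overline{F}$ vanishes on $C$ (already at $e=1$), so is nilpotent, and \Cref{lem: nilpotent cokernel F-rational} then yields that $P$ is $F$-nilpotent. The bulk of the work has already been done in Section~2, in the description of the cokernel and the two combinatorial lemmas; the main (small) obstacle at this stage is only to verify that raising the solitary generator of $C$ to the $p$-th power lands in $P$, and the combinatorial lemmas are tailored so this is immediate once one notes that a nontrivial Frobenius power cannot yield a vector whose second coordinate is $1$ (respectively, the vector $\mbm$).
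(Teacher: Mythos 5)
Your proof is correct and follows essentially the same route as the paper: show the Frobenius action on the cokernel $C$ of $\mcp_{n,d,\mbm}\rightarrow V_{n,d}$ vanishes using the combinatorial description from Section 2, then invoke \Cref{lem: nilpotent cokernel F-rational} with $V_{n,d}$ being $F$-rational. The only cosmetic difference is in the $\max(\mbm)<d-1$ subcase, where the paper notes $C$ is concentrated in a single positive degree (so $\overline{F}$ must kill it), while you instead check directly that $p\mbm\neq\mbm$ lies in $\mca_{n,d,\mbm}$; both are immediate from \Cref{lem: combinatorial cokernel max m < d-1 case}.
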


\begin{proof}
Write $P=\mcp_{n,d,\mbm}$. First, suppose $\max(\mbm)=d-1$, and assume without loss of generality that $\mbm=(d-1,1,0,\ldots,0)$. If $d=2$, then $P \simeq k[x^2,y^2]$ is regular and is thus $F$-nilpotent. If $d>2$, By \Cref{lem: combinatorial cokernel max m = d-1 case}, the cokernel $C$ of $P\rightarrow V_{n,d}$ is principally generated as a $P$-module by $x_1^{d-1}x_2+P$. Then, $\overline{F}(x_1^{d-1}x_2 + P) = x_1^{dp-p}x_2^p + P = P$ since the exponent on $x_2$ is not 1. Consequently, $\overline{F}:C \rightarrow C$ is the zero map.

Now assume $\max(\mbm)<d-1$. Then, by \Cref{lem: combinatorial cokernel max m < d-1 case}, we know the cokernel $C$ of $P\rightarrow V_{n,d}$ is concentrated in a single positive degree, and so $\overline{F}(C)=0$ as well, since $\overline{F}$ multiplies degrees by $p$.

In either case, we can apply \Cref{lem: nilpotent cokernel F-rational} to conclude the proof. 
\end{proof}

To handle the $d=2$ case, we will need to utilize the Nagel-Schenzel isomorphism given in \cite{NS} for local cohomology modules. We re-state the isomorphism below in the context that we need -- the original statement utilizes \textit{filter-regular sequences}, which are a generalization of regular sequences.

\begin{nthm}[Nagel-Schenzel]
Let $(R,\mfm)$ be a local ring and let $x_1,\ldots,x_t\in \mfm$ be a regular sequence on a finitely-generated $R$-module $M$, with $I=(x_1,\ldots,x_t)$. Then, $H^t_\mfm(M)\simeq H^0_\mfm(H^t_I(M))$. 
\end{nthm}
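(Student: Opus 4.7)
The plan is to derive the isomorphism from the Grothendieck composition spectral sequence associated to the functor identity $\Gamma_{\mfm} = \Gamma_{\mfm} \circ \Gamma_I$, where $I = (x_1, \ldots, x_t)$. This identity holds because $I \subseteq \mfm$: every $\mfm$-torsion element is a fortiori $I$-torsion, so $\Gamma_{\mfm}(\Gamma_I(N)) = \Gamma_{\mfm}(N)$ for every $R$-module $N$. Furthermore, $\Gamma_I$ sends injective $R$-modules to injective $R$-modules, because every injective decomposes as a direct sum of injective hulls $E(R/\mfp)$, and $\Gamma_I$ acts as the identity on summands with $\mfp \supseteq I$ and as zero on the rest. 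Hence the Grothendieck spectral sequence is available:
$$E_2^{p,q} = H^p_{\mfm}\bigl(H^q_I(M)\bigr) \Longrightarrow H^{p+q}_{\mfm}(M).$$

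The crucial input is the vanishing $H^q_I(M) = 0$ for $q < t$, which follows from the regular sequence hypothesis: since $x_1, \ldots, x_t$ is an $M$-regular sequence contained in $I$, we have $\on{grade}_I(M) \geq t$, and a classical theorem of Grothendieck identifies this grade with the least index of nonvanishing local cohomology. Consequently, on the diagonal $p+q = t$ only the term $E_2^{0,t} = H^0_{\mfm}(H^t_I(M))$ can be nonzero: any $E_2^{p, t-p}$ with $p \geq 1$ vanishes because $t - p < t$. The outgoing differentials $d_r : E_r^{0,t} \to E_r^{r, t-r+1}$ land in zero since $t - r + 1 < t$ for $r \geq 2$, and the incoming differentials come from $E_r^{-r, t+r-1} = 0$ (negative first index). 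Therefore $E_\infty^{0,t} = E_2^{0,t}$, and since this is the unique nonzero term on the $t$-diagonal, the abutment filtration on $H^t_{\mfm}(M)$ collapses to yield $H^t_{\mfm}(M) \simeq H^0_{\mfm}(H^t_I(M))$.

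The main obstacle is simply verifying that the Grothendieck spectral sequence applies in this setting; the remaining steps are bookkeeping with differentials and indices. As a more elementary alternative, one can induct on $t$: the case $t = 1$ follows by comparing the long exact sequences in $\Gamma_{\mfm}$ and in $\Gamma_{(x_1)}$ attached to $0 \to M \xrightarrow{x_1} M \to M/x_1 M \to 0$, using the identification $H^1_{(x_1)}(M) \simeq M_{x_1}/M$ valid for a nonzerodivisor. For $t > 1$ one replaces $M$ by $M/x_1 M$ (on which $x_2, \ldots, x_t$ remains a regular sequence) and tracks the $\mfm$-torsion through the long exact sequences. However, the spectral sequence approach is cleaner and avoids such induction.
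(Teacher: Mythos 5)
The paper does not prove this statement at all---it is quoted from Nagel--Schenzel \cite{NS}, whose original version is stated for filter-regular sequences---so there is no internal proof to compare with; judged on its own, your argument is correct and self-contained. The composite-functor spectral sequence $E_2^{p,q}=H^p_\mfm\bigl(H^q_I(M)\bigr)\Rightarrow H^{p+q}_\mfm(M)$ does apply: $\Gamma_\mfm=\Gamma_\mfm\circ\Gamma_I$ because $I\subseteq\mfm$, and over a Noetherian ring $\Gamma_I$ preserves injectivity via the decomposition of injectives into hulls $E(R/\mfp)$ (this is where Noetherianness enters, both for that structure theorem and for $\Gamma_I$ commuting with the direct sums involved); you should flag that hypothesis, though it is ambient throughout the paper. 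The vanishing $H^q_I(M)=0$ for $q<t$ follows from $\on{grade}_I(M)\ge t$, and note that $IM\neq M$ is automatic by Nakayama since $x_i\in\mfm$ and $M$ is finitely generated, so the grade characterization of the least nonvanishing index is legitimately available. With that vanishing, every term on the diagonal $p+q=t$ except $E_2^{0,t}$ dies, your bookkeeping of the incoming and outgoing differentials at $(0,t)$ is right, and the first-quadrant convergence gives $H^t_\mfm(M)\simeq H^0_\mfm\bigl(H^t_I(M)\bigr)$ with no filtration ambiguity, since only one graded piece survives. Two remarks: your argument proves exactly the special case stated (a genuine regular sequence); it does not recover the filter-regular version of \cite{NS} to which the paper alludes, because for a filter-regular sequence the terms $E_2^{p,q}$ with $q<t$ need not vanish, so the collapse requires a finer analysis. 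The inductive alternative you sketch is fine in spirit for $t=1$ (one needs $H^j_\mfm(M_{x_1})=0$ for all $j$, which holds because $x_1\in\mfm$ makes $\mfm$ expand to the unit ideal of $R_{x_1}$), but the inductive step is only gestured at; since the spectral-sequence argument is complete, that is harmless.
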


We now show the $F$-singularity type in the $d=2$ case depends on the parity of $p$.

\begin{sthm}
Let $k$ be a field of characteristic $p>0$. The pinched Veronese ring $\mcp_{n,2,\mbm}$ with $\max(\mbm)=1$ is $F$-nilpotent for $p = 2$ and is $F$-injective for $p > 2$. 
\end{sthm}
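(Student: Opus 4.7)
The plan is to exploit the short exact sequence $0 \to P \to V \to C \to 0$ from \Cref{thm:normalization of PV rings} (with $P = \mcp_{n,2,\mbm}$, $V = V_{n,2}$) and analyze the induced Frobenius action on the cokernel $C$. Assuming without loss of generality that $\mbm = (1, 1, 0, \ldots, 0)$, \Cref{lem: combinatorial cokernel max m = d-1 case} gives $C$ the $k$-basis $\{[x_1^{2s+1}x_2^{2t+1}] : s, t \ge 0\}$; in particular $C$ is the cyclic $P$-module generated by $[x_1 x_2]$, the $P$-action factors through the polynomial subring $R := k[x_1^2, x_2^2] \subseteq P$, and $C \simeq R$ as $R$-modules. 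Consequently $x_1^2, x_2^2$ is a regular sequence on $C$, so $C$ is a $2$-dimensional Cohen-Macaulay $P$-module. The case $n = 2$ is handled immediately since $\mcp_{2,2,(1,1)} \simeq k[x^2, y^2]$ is regular, so assume $n > 2$, and note $\depth(P) = 3$ by \Cref{lem: depth of pinched veroneses}.

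For $p = 2$, compute $\overline{F}([x_1 x_2]) = [x_1^2 x_2^2]$; the exponent $(2, 2, 0, \ldots, 0)$ is \emph{not} in the missing set of \Cref{lem: combinatorial cokernel max m = d-1 case}, so $x_1^2 x_2^2 \in P$ and the class vanishes. Hence $\overline{F} \equiv 0$ on $C$, and \Cref{lem: nilpotent cokernel F-rational} yields $F$-nilpotence of $P$.

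For $p > 2$, $\overline{F}([x_1 x_2]) = [x_1^p x_2^p] = x_1^{p-1} x_2^{p-1} \cdot [x_1 x_2]$, with $x_1^{p-1}x_2^{p-1} \in R$ since $p-1$ is even. Under $C \simeq R$ the Frobenius thus becomes the \emph{twisted} $p$-linear map $r \mapsto x_1^{p-1}x_2^{p-1} r^p$ of $R$. Since $C$ is Cohen-Macaulay of dimension $2$ with support inside $V(x_3, \ldots, x_n)$, one has $H^j_\mfm(C) = 0$ for $j \neq 2$ and $H^2_\mfm(C) \simeq H^2_{(y_1, y_2)}(R)$ with $y_i := x_i^2$, which has the standard inverse-polynomial $k$-basis $\{[1/(y_1^a y_2^b)] : a, b \ge 1\}$. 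The twisted Frobenius sends $[1/(y_1^a y_2^b)]$ to $[1/(y_1^{ap - (p-1)/2} y_2^{bp - (p-1)/2})]$; since $p > 2$, both new exponents are at least $(p+1)/2 \ge 2$, so the image is a nonzero basis element, and because the indexing map $(a, b) \mapsto (ap - (p-1)/2, bp - (p-1)/2)$ is injective, $\overline{F}$ is injective on $H^2_\mfm(C)$ as a $p$-linear map on a $k$-basis.

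To deduce $F$-injectivity of $P$, I combine this with the long exact sequence of local cohomology attached to $0 \to P \to V \to C \to 0$, which is compatible with Frobenius by \Cref{rmk: homological algebra with graded frob actions}. The vanishing $H^j_\mfm(V) = 0$ for $j < n$ gives $H^j_\mfm(P) \simeq H^{j-1}_\mfm(C)$ for $0 < j < n$; the only nonzero such module is $H^3_\mfm(P) \simeq H^2_\mfm(C)$, on which $\overline{F}$ is injective by the previous paragraph (the modules for $j < 3$ vanish since $\depth(P) = 3$). For $j = n > 3$, $H^n_\mfm(P) \simeq H^n_\mfm(V)$ as Frobenius modules and injectivity follows from $F$-regularity of $V$; for the edge case $n = 3$, the short exact sequence $0 \to H^2_\mfm(C) \to H^3_\mfm(P) \to H^3_\mfm(V) \to 0$ together with a snake lemma argument using the injective Frobenius on each end term yields injectivity on $H^3_\mfm(P)$. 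I expect the main obstacle to be the explicit description of the twisted Frobenius on $H^2_\mfm(C)$ and checking that the denominator exponents remain strictly positive in its image -- this is precisely the step that requires $p > 2$.
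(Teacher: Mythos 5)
Your proposal is correct, and its overall architecture coincides with the paper's: reduce to the cokernel $C$ of $P \to V_{n,2}$, kill the Frobenius action on $C$ when $p=2$ and invoke \Cref{lem: nilpotent cokernel F-rational}, and for $p>2$ prove injectivity of the induced Frobenius on $H^2_\mfm(C)$ and patch it together with injectivity on $H^n_\mfm(V)$ via the Frobenius-compatible long exact sequence (isomorphisms $H^3_\mfm(P)\simeq H^2_\mfm(C)$ and $H^n_\mfm(P)\simeq H^n_\mfm(V)$ for $n>3$, and the three-term sequence plus a snake/five-lemma argument for $n=3$). Where you genuinely diverge is the key injectivity step on $H^2_\mfm(C)$: the paper first shows $\overline F$ is injective on $C$ itself, then applies the Nagel--Schenzel isomorphism $H^2_\mfm(C)\simeq H^0_\mfm(H^2_I(C))$ with $I=(x_1^2,x_2^2)$ and argues injectivity on the direct limit $\varinjlim C/(x_1^{2i},x_2^{2i})C$ by tracking which monomials survive; you instead observe that $C\simeq k[x_1^2,x_2^2]$ as a module over the subring $R=k[x_1^2,x_2^2]$, so that $\overline F$ becomes the twisted $p$-linear map $r\mapsto y_1^{(p-1)/2}y_2^{(p-1)/2}r^p$ on a polynomial ring, and you check injectivity directly on the inverse-monomial basis of $H^2_{(y_1,y_2)}(R)$, where the hypothesis $p>2$ enters exactly as the requirement that the shifted exponents stay positive. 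Your route avoids Nagel--Schenzel altogether and makes the degree bookkeeping completely explicit (one only needs that a $p$-linear map sending distinct basis vectors to distinct basis vectors is injective); the paper's direct-limit argument is marginally more robust in that it never needs the identification of $C$ with a polynomial ring, only the regular sequence $x_1^2,x_2^2$ on $C$ and the monomial description of its elements. Both arguments are sound, and your treatment of the $n=2$ and $p=2$ cases matches the paper's.
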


\begin{proof} 
Assume without loss of generality that $\mbm=(1,1,0,\ldots,0)$. We saw in \Cref{lem: combinatorial cokernel max m = d-1 case} that $C$ is principally generated as a $P$-module by $x_1x_2+P$. If $p=2$, $\overline{F}(x_1x_2+P) = x_1^2x_2^2+P = P$ since $x_1^2$ and $x_2^2$ are in $P$. Hence, if $p=2$, $\overline{F}:C \rightarrow C$ is the zero map. We can now apply \Cref{lem: nilpotent cokernel F-rational} to see that $P$ is $F$-nilpotent.

If $p$ is odd, however, then for any nonzero  $r\cdot x_1x_2 +P\in C$, we have $\overline{F}(r\cdot x_1x_2+P) = r^p\cdot x_1^px_2^p+P$, and since $p$ is odd, $x_1^px_2^p \not \in P$. Furthermore, we can see that $x_1^2$ and $x_2^2$ are regular elements on $C$ but all other generators of the homogeneous maximal ideal $\mfm$ of $P$ annihilate $C$, so that if $r\cdot x_1x_2+P \neq P$ then $r^p\cdot x_1^px_2^p+P \neq P$ as well, as $r$ can only be of the form $\lambda x_1^{2i}x_2^{2j}$ for some $\lambda \in k$ and $i,j \in \mbn$. Consequently, $\overline{F}:C \rightarrow C$ is injective as well. 

Now we will now show that the induced Frobenius map $\overline{F}$ on $H^2_\mfm(C)$ is also injective. Letting $I=(x_1^2,x_2^2)$, by the Nagel-Schenzel isomorphism we have $H^2_\mfm(C) = H^0_\mfm(H^2_I(C))$. We consider $H^2_I(C)$ as the direct limit: \[\varinjlim_i \left( C/(x_1^{2i},x_2^{2i})C \xrightarrow{\cdot x_1^2x_2^2} C/(x_1^{2i+2},x_2^{2i+2})C \right).\]

If $[z + (x_1^{2i},x_2^{2i})C]$ is a class in the direct limit, then $\overline{F}([z + (x_1^{2i},x_2^{2i})C]) = [\overline{F}(z) +(x_1^{2ip},x_2^{2ip})C]$. As $x_1^2$ and $x_2^2$ form a regular sequence on $C$, the direct limit system is injective, and consequently $[\overline{F}(z) + (x_1^{2ip},x_2^{2ip})C]=0$ if and only if $\overline{F}(z) \in (x_1^{2ip},x_2^{2ip})C$. However, if $z \not \in (x_1^{2i},x_2^{2i})C$, then $z$ is a $k$-linear sum of monomials of the form $x_1^{2s+1}x_2^{2t+1}+P$ with $s<i$ and $t<i$. Then, $\overline{F}(z)$ is a $k$-linear sum of monomials of the form $x_1^{2sp+p}x_2^{2tp+p}+P$. We can see that this is not in $(x_1^{2ip},x_2^{2ip})C$ as $s<i$ implies $2s+1<2i$, so $2sp+p<2ip$, and similarly, $2tp+p<2ip$. Thus, $\overline{F}$ is injective on $\varinjlim_i C/(x_1^{2i},x_2^{2i})C = H^2_I(C)$, and so $\overline{F}$ is injective on the submodule $(0:_{H^2_I(C)} \mfm^{\infty}) \simeq H^2_\mfm(C)$. 

If $n=3$, to see that $P$ is $F$-injective we need only see that the Frobenius action on $H^3_\mfm(P)$ is injective as $P$ is Cohen-Macaulay. We have the short exact sequence: \begin{center}
    \begin{tikzcd}
    0 \arrow{r} & H^2_\mfm(C) \arrow{r} & H^3_\mfm(P) \arrow{r} & H^3_\mfm(V) \arrow{r}& 0
    \end{tikzcd}
\end{center} and the Frobenius actions on the outer two modules are injective, which implies the Frobenius action on $H^3_\mfm(P)$ is also. If $n>3$, we get that the local cohomology of $P$ is completely described by the isomorphisms $H^2_\mfm(C)\simeq H^3_\mfm(P)$ and $H^n_\mfm(P)\simeq H^n_\mfm(V)$, and since $\overline{F}$ on $H^2_\mfm(C)$ and $F$ on $H^n_\mfm(V)$ are injective, $P$ is $F$-injective.
\end{proof}

\begin{scor}\label{thm: omitted example is F-pure}
By \Cref{thm A}, we have $\mcp_{3,2,\mbm}$ with $\max(\mbm)=1$ is Gorenstein, and the previous theorem shows that it is also $F$-injective in odd characteristic. Hence, by a result of Fedder (\cite[Lemma~3.3]{Fedder}), $\mcp_{3,2,\mbm}$ with $\max(\mbm)=1$ is $F$-pure in odd characteristic.
\end{scor}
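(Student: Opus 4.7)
The plan is to assemble three ingredients already in place and then apply Fedder's criterion. First, I would appeal to \Cref{thm:max = d-1 is gorenstein}, which establishes that $\mcp_{3,2,\mbm}$ with $\max(\mbm)=1$ is in fact a complete intersection (via the explicit presentation $k[a,b,c,d,e]/(ae-b^2,\,ce-d^2)$), and hence Gorenstein. Second, I would invoke the preceding $F$-injectivity theorem of this subsection, which shows that the same ring is $F$-injective whenever $p>2$.

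The substantive step is then to apply \cite[Lemma~3.3]{Fedder}, which states that for a Gorenstein (graded) local ring of prime characteristic, $F$-injectivity is equivalent to $F$-purity. Since $\mcp_{3,2,\mbm}$ with $\max(\mbm)=1$ simultaneously satisfies both hypotheses when $p$ is odd, $F$-purity follows immediately, giving the desired conclusion.

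There is no genuine obstacle here: the hard work was carried out in \Cref{thm:max = d-1 is gorenstein} and in the $F$-injectivity theorem above, and Fedder's lemma is a short classical argument (using the Matlis dual of the Frobenius on the top local cohomology module, together with the fact that the socle of a Gorenstein artinian quotient is one-dimensional) which then assembles the pieces. An alternative self-contained route would be to verify Fedder's other criterion directly, by exhibiting an element of $((ae-b^2,\,ce-d^2)^{[p]} : (ae-b^2,\,ce-d^2))$ not lying in the homogeneous maximal ideal of $k[a,b,c,d,e]$ raised to its $p$th bracket power, but this computation is less clean than leveraging the Gorenstein plus $F$-injective equivalence.
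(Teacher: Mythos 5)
Your proposal is correct and follows exactly the paper's argument: combine the Gorenstein (indeed complete intersection) property from \Cref{thm:max = d-1 is gorenstein}, the $F$-injectivity in odd characteristic from the preceding theorem, and Fedder's result that Gorenstein plus $F$-injective implies $F$-pure. The alternative route via Fedder's ideal-theoretic criterion on the presentation $k[a,b,c,d,e]/(ae-b^2, ce-d^2)$ is a reasonable sanity check but is not needed, as you note.
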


\begin{srmk}
As noted in Section 2, $F$-rational rings are both $F$-nilpotent and $F$-injective. Interestingly, the class of pinched Veronese rings in the above theorem are never $F$-rational (since they are not normal), but may have either $F$-nilpotent or $F$-injective singularities depending on the characteristic of the field. 
\end{srmk}

 In \cite[Example 2.5]{pandey}, the second author calculates the cohomological dimension of the ideal defining $\mathcal{P}_{(2,4,(2,2)}$ over the integers by using the normalization map. We have now finished the proof of \Cref{thm B}. 

\subsection{Frobenius Test Exponents for Pinched Veroneses}

In this subsection, we explore the Frobenius test exponent for parameter ideals of pinched Veronese rings. This numerical invariant measures how far any parameter ideal is from being Frobenius closed in the following sense. 

For any ideal $I$, $I^F$ is finitely generated, so there is an $e \in \mbn$ so that $(I^F)^{\fbp{p^e}} = I^{\fbp{p^e}}$. If we restrict to the class of parameter ideals, there may possibly be a uniform exponent which trivializes the Frobenius closure of all parameter ideals simultaneously, which motivates the following definition.

\begin{sdef}
Let $(R,\mfm)$ be a (graded) local ring of prime characteristic $p>0$ and let $\mfq$ be an ideal generated by a full (homogeneous) system of parameters of $R$. Then, the \textbf{Frobenius test exponent of $\mfq$} is the smallest $e \in \mbn$ so that $(\mfq^F)^{\fbp{p^e}}=\mfq^{\fbp{p^e}}$. The \textbf{Frobenius test exponent for $R$} is: \[\fte(R)=\sup\{\fte(\mfq) \mid \mfq \text{ is a (homogeneous) parameter ideal of } R\}.\]
\end{sdef}

In this sense, $\fte(R)$ uniformly annihilates Frobenius closure relations for all parameter ideals. Frobenius test exponents have been shown to be finite in several important cases, typically related to nilpotence properties. See the introduction of \cite{Mad19} for a historical survey of finiteness for Frobenius test exponents. In particular, Katzman-Sharp showed in \cite[Theorem~2.4]{KS} that $\fte(R)=\hsl(H^{\dim(R)}_\mfm(R))$ (defined below) when $R$ is Cohen-Macaulay. Recently, Quy showed in \cite{Q} that weakly $F$-nilpotent rings have finite Frobenius test exponent as well. We will utilize Quy's upper bound to bound Frobenius test exponents for pinched Veronese rings.

When $R$ is $F$-nilpotent, Polstra-Quy show in \cite[Theorem~5.11]{PQ} that $\mfq^*=\mfq^F$ for all parameter ideals $\mfq$ of $R$. Since nearly all pinched Veronese rings are $F$-nilpotent, we can also treat the Frobenius test exponent as a measure of how far parameter ideals in these rings are from being tightly closed, as letting $e = \fte(R)$, then $(\mfq^*)^{\fbp{p^e}} = \mfq^{\fbp{p^e}}$.

Upper bounds for Frobenius test exponents are typically given in terms of the Hartshorne-Speiser-Lyubeznik numbers of $R$.

\begin{sdef}
Let $(R,\mfm)$ be a graded local ring of prime characteristic $p>0$ and let $0 \le j \le d = \dim(R)$. Write $0^F_{H^j_\mfm(R)}$ for the set of elements of the graded local cohomology module $H^j_\mfm(R)$ which are in the kernel of $F^e$ for some $e$. Then the \textbf{Hartshorne-Speiser-Lyubeznik number of $H^j_\mfm(R)$} is defined as: \[ \hsl(H^j_\mfm(R)) = \inf\left\lbrace e \in \mbn \left| F^e\left(0^F_{H^j_\mfm(R)}\right)\right. = 0 \right\rbrace. \] Furthermore, the \textbf{Hartshorne-Speiser-Lyubeznik number of $R$} is defined as: \[ \hsl(R)=\sup \{\hsl(H^j_\mfm(R)) \mid 0 \le j \le d\}.\]
\end{sdef}

Amazingly, the Hartshorne-Speiser-Lyubeznik numbers of $R$ must be finite since $H^j_\mfm(R)$ is Artinian. Initial results about finiteness for these numbers is due to Hartshorne and Speiser, and hypotheses were removed by Lyubeznik and later Sharp. Their results are collected below.

\begin{nthm}[Hartshorne-Speiser \cite{HS}, Lyubzenik \cite{Lyubeznik1997}, Sharp \cite{Sharp07}]
Let $(R,\mfm)$ be a (graded) local ring of prime characteristic $p>0$. Then, $\hsl(R)<\infty$.
\end{nthm}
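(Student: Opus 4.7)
The plan is to deduce the theorem from a general module-theoretic claim: if $M$ is any Artinian $R$-module equipped with a $p$-linear Frobenius action $F$, then there exists $e \in \mbn$ such that $F^e\bigl(0^F_M\bigr) = 0$. Since each local cohomology module $H^j_\mfm(R)$ is Artinian (a standard consequence of $R$ being Noetherian, via Grothendieck vanishing and the structure theory of $H^j_\mfm$), applying this claim to $M = H^j_\mfm(R)$ for each $0 \leq j \leq \dim R$ produces a finite bound on each $\hsl(H^j_\mfm(R))$. Taking the maximum over finitely many $j$ yields the desired finiteness of $\hsl(R)$.

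For the abstract claim, my strategy is to exploit Artinian-ness through a descending chain. Consider
\[ 0^F_M \;\supseteq\; F\bigl(0^F_M\bigr) \;\supseteq\; F^2\bigl(0^F_M\bigr) \;\supseteq\; \cdots, \]
a decreasing chain of $R$-submodules of the Artinian module $0^F_M$. It must stabilize at some index $e_0$ to a submodule $N$, and by construction $F$ restricts to a surjective $p$-linear endomorphism of $N$. Moreover $N \subseteq 0^F_M$, so every element of $N$ is annihilated by some power of $F$. If we can show $N = 0$, then $F^{e_0}\bigl(0^F_M\bigr) = N = 0$, which completes the argument.

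The main obstacle is precisely the vanishing $N = 0$, because $F$ is $p$-linear rather than $R$-linear, which prevents a naive appeal to standard Noetherian/Artinian dualities. My plan is to pass to the $\mfm$-adic completion $\hat R$ (using that local cohomology commutes with completion) and invoke Matlis duality to identify the Artinian $R$-module $N$ with the dual of a finitely generated $\hat R$-module $N^\vee$. The surjective $p$-linear action $F \colon N \to N$ dualizes (after the appropriate Frobenius twist) to an injective $\hat R$-linear Cartier-type operator on $N^\vee$, while the hypothesis $N \subseteq 0^F_M$ forces the corresponding inverse system of images on $N^\vee$ to eventually vanish. These two properties together, combined with Noetherian-ness of $N^\vee$, force $N^\vee = 0$ and hence $N = 0$. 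An alternative route I would consider, avoiding completion, is to invoke Sharp's observation that $0^F_M$ is in fact a finitely generated module over the Frobenius skew polynomial ring $R\{F\}$; then one simply takes $e$ to be the maximum nilpotency index among a finite generating set, using that an $R\{F\}$-module generated by finitely many $F$-nilpotent elements is itself annihilated by a single power of $F$.
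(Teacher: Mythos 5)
The paper does not prove this statement at all --- it is quoted from Hartshorne--Speiser, Lyubeznik, and Sharp --- so your proposal is being measured against the known proofs in the literature rather than an in-paper argument. Your opening reduction is the standard one and is essentially fine: each $H^j_\mfm(R)$ is Artinian, and it suffices to kill $0^F_M$ for an Artinian module $M$ with a Frobenius action. One small repair: since $F$ is only $p$-linear, $F^e(0^F_M)$ is not an $R$-submodule; you must take the $R$-submodules generated by these images, and then DCC gives a stable submodule $N$ with $R\cdot F(N)=N$ (not $F(N)=N$). That fix is routine, and you have correctly isolated the crux, namely $N=0$.

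The genuine gap is that neither of your two routes proves that crux --- each asserts a statement that is either false as stated or is the theorem itself in disguise. In the duality route: (i) because $F$ is only surjective on $N$ up to $R$-spans, the Matlis-dual operator $C\colon F_*(N^\vee)\to N^\vee$ need not be injective; the dual of $R\cdot F(N)=N$ is strictly weaker than injectivity of $C$. (ii) More seriously, the hypothesis $N\subseteq 0^F_M$ does \emph{not} dualize to ``the images of $C^e$ eventually vanish''; it dualizes only to the statement that the descending chain of submodules $\ip{\on{im}(C^e)}$ has zero intersection. Eventual vanishing (equivalently, stabilization) of these images on a Noetherian module is exactly the dual formulation of the Hartshorne--Speiser--Lyubeznik statement you are trying to prove (the stabilization theorem for Cartier-type operators due to Gabber and Blickle--B\"ockle), so as written the argument is circular; passing from ``intersection zero'' to ``some image is zero'' is where Hartshorne--Speiser (perfect residue field), Lyubeznik ($F$-modules and the $\Gamma$-construction, which is also what removes the $F$-finiteness you implicitly use in identifying $(F_*M)^\vee$ with $F_*(M^\vee)$), and Sharp do real work. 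The alternative route fails for a parallel reason: it is not true in general that $0^F_M$ is finitely generated over the skew polynomial ring $R\{F\}$ --- for instance, equip the (non-finitely generated) Artinian module $E_R(k)$ over a one-dimensional local ring with the zero Frobenius action, so that $0^F_M=M$ is not finitely generated over $R\{F\}$ even though HSL is trivial there. Your closing observation that finite $R\{F\}$-generation of $0^F_M$ would immediately give the bound $e=\max_i e_i$ is correct, but the generation statement you invoke is not an available ``observation,'' and any usable finiteness theorem of that type for Artinian modules with a Frobenius action is again essentially equivalent to the result being proved.
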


Notice that $R$ is $F$-injective if and only if $\hsl(R)=0$. Since $V=V_{n,d}$ is $F$-regular, it is in particular $F$-injective, and so $\hsl(V)=0$. We can then find upper bounds on the Hartshorne-Speiser-Lyubeznik numbers of $P=\mcp_{n,d,\mbm}$ using the cokernel of $P\rightarrow V$.

\begin{sthm}\label{thm: hsl numbers for pinched veronese in 2 vars}
The pinched Veronese ring $\mcp_{n,d,\mbm}$ has $\hsl(P)\le 1$.
\end{sthm}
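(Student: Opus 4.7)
The plan is to apply the long exact sequence of local cohomology to the defining short exact sequence
$$0 \to P \to V \to C \to 0,$$
where $P = \mcp_{n,d,\mbm}$ and $V = V_{n,d}$ is the normalization of $P$ when $\max(\mbm) < d$. If $\max(\mbm) = d$, then $P$ is $F$-regular by \Cref{thm B}, hence $F$-injective, so $\hsl(P) = 0 \le 1$ is immediate. A second easy case to dispose of at the outset is $d=2$ with $\max(\mbm)=1$ and $p$ odd, where the previous theorem already gives $F$-injectivity of $P$, so $\hsl(P)=0$.

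In the remaining sub-cases (namely $\max(\mbm) < d-1$, or $\max(\mbm) = d-1$ with $d > 2$, or $d = 2$ and $p=2$), the first main step is to verify that the induced graded Frobenius action $\overline{F} : C \to C$ is identically zero. Using the explicit principal generator of $C$ furnished by \Cref{cor: cokernel is principal}, this reduces to a short computation: the $p$-th power of the generating monomial has an exponent vector which lies in $\mca_{n,d,\mbm}$ by \Cref{lem: combinatorial cokernel max m < d-1 case} or \Cref{lem: combinatorial cokernel max m = d-1 case}. For instance, when $\max(\mbm) = d-1$ with $d > 2$, raising $x_1^{d-1}x_2$ to the $p$-th power yields a monomial with $x_2$-exponent $p \ge 2$, so its exponent vector avoids the excluded form $(ds-1, 1, 0, \ldots, 0)$; the other two sub-cases are analogous. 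Consequently the induced Frobenius action on $H^j_\mfm(C)$ is also zero for every $j$.

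The second main step extracts the HSL bound from the long exact sequence of local cohomology as in \Cref{rmk: homological algebra with graded frob actions}. Because $V$ is Cohen-Macaulay, $H^j_\mfm(V) = 0$ for $j < n$, so the long exact sequence yields Frobenius-compatible isomorphisms $H^j_\mfm(P) \cong H^{j-1}_\mfm(C)$ for $j < n$; hence the Frobenius action on $H^j_\mfm(P)$ is already zero for $j < n$, giving $\hsl(H^j_\mfm(P)) \le 1$. For $j = n$ we have a short exact sequence
$$0 \to H^{n-1}_\mfm(C) \xrightarrow{\delta} H^n_\mfm(P) \xrightarrow{\alpha} H^n_\mfm(V) \to 0,$$
and given any $\xi \in H^n_\mfm(P)$ with $F^e(\xi) = 0$, the $F$-injectivity of $V$ forces $\alpha(\xi) = 0$; writing $\xi = \delta(\eta)$ then gives $F(\xi) = \delta(\overline{F}(\eta)) = 0$, so $\hsl(H^n_\mfm(P)) \le 1$. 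I do not anticipate a serious obstacle: the combinatorial heavy lifting has already been done in Section 2, and since $C$ is principally generated the verification that $\overline{F} = 0$ on $C$ in each sub-case is essentially bookkeeping.
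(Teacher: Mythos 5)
Your proposal is correct and follows essentially the same route as the paper: dispose of the $F$-injective cases, show $\overline{F}=0$ on the principally generated cokernel $C$ via the combinatorial lemmas, use the long exact sequence with $H^j_\mfm(V)=0$ for $j<n$ to kill Frobenius on the low local cohomology of $P$, and run the $\delta$/$\alpha$ argument at the top using $F$-injectivity of $V$. The only cosmetic slip is writing the top sequence as short exact with $0$ on the right (surjectivity of $\alpha$ can fail, e.g.\ for $n=d=2$, since it needs $H^n_\mfm(C)=0$), but your argument never uses that surjectivity, so nothing is affected.
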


\begin{proof}
Let $P=\mcp_{n,d,\mbm}$, $V = V_{n,d}$, $\mfm$ be the homogeneous maximal ideal of $P$, and let $C$ be the cokernel of $P\rightarrow V$. We now analyze the Hartshorne-Speiser-Lyubeznik numbers of $P$. 

Three simple cases are when $\max(\mbm)=d$, when $n=d=2$ and $\mbm=(1,1)$, and finally when $d=2$, $\max(\mbm)=1$ and $p>2$, since in these cases, $P$ is $F$-injective, so $\hsl(P)=0$.

In the remaining cases, we have seen that the Frobenius action $\overline{F}: C\rightarrow C$ is the zero map, which implies that the induced Frobenius action $\overline{F}:H^j_\mfm(C) \rightarrow H^j_\mfm(C)$ is the zero map as well. Notably, as observed in the proof of \Cref{lem: depth of pinched veroneses}, in all cases $C$ is a Cohen-Macaulay $P$-module.

In the cases where $P$ itself is not Cohen-Macaulay, write $\depth(P)=j$. We then get $H^{j-1}_\mfm(C)\simeq H^j_\mfm(P)$, and in particular, we also have  $F:H^j_\mfm(P)\rightarrow H^j_\mfm(P)$ is also the zero map. This implies $\hsl(H^j_\mfm(P))=1$. The only other local cohomology module for $P$ is $H^n_\mfm(P)\simeq H^n_\mfm(V)$, and since $F:H^n_\mfm(V) \rightarrow H^n_\mfm(V)$ is injective, we have $\hsl(H^n_\mfm(P))=0$. This shows $\hsl(P)=1$ when $P$ is not Cohen-Macaulay.

When $P$ is Cohen-Macaulay but not $F$-injective, then we get the short exact sequence: \begin{center} \begin{tikzcd}
0 \arrow{r} & H^{n-1}_\mfm(C) \arrow{r}{\delta} & H^n_\mfm(P) \arrow{r}{\alpha} & H^n_\mfm(V) \arrow{r} & 0
\end{tikzcd}\end{center} and for any $\xi \in H^n_\mfm(P)$ such that $F^e(\xi)=0$, we have $\alpha(F^e(\xi))=F^e(\alpha(\xi))=0$. Since $F^e$ is injective on $H^n_\mfm(V)$, we know $\alpha(\xi)=0$ and $\xi = \delta(\xi')$ for some $\xi' \in H^{n-1}_\mfm(C)$. Then, $F(\xi) = F(\delta(\xi')) = \delta(\overline{F}(\xi')) = 0$, which shows $\hsl(H^n_\mfm(P)) = 1$ and $\hsl(P)=1$.  
\end{proof}

Now we will utilize the primary result of \cite{Q}, where Quy shows that the Hartshorne-Speiser-Lyubeznik numbers bound the Frobenius test exponent.

\begin{nthm}[Quy]
Let $(R,\mfm)$ be a local ring of prime characteristic $p>0$ and dimension $n$. If $R$ is weakly $F$-nilpotent, then: \[\fte(R) \le \sum_{j=0}^n \binom{n}{j}\hsl(H^j_\mfm(R)).\]
\end{nthm}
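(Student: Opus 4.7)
The plan is to induct on $n = \dim R$, with the base case $n=0$ trivial since the only parameter ideal is the zero ideal. For the inductive step, the guiding principle is to translate the Frobenius closure of a parameter ideal $\mfq=(x_1,\ldots,x_n)$ into the Frobenius action on a local cohomology module. Via the \v Cech complex, $H^n_\mfq(R)=\varinjlim_t R/\mfq^{\fbp{p^t}}$, and under this identification an element $u\in \mfq^F$ produces a class in $H^n_\mfq(R)$ annihilated by some power of Frobenius; the task is to uniformly annihilate $0^F_{H^n_\mfq(R)}$.

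When $R$ is Cohen-Macaulay, $H^n_\mfq(R)\simeq H^n_\mfm(R)$ and the required exponent is just $\hsl(H^n_\mfm(R))$, recovering the Katzman-Sharp theorem. In general one has to account for the discrepancy between these two modules. The natural approach is to reduce the dimension by one: if $x$ is a sufficiently generic (e.g. filter-regular) parameter, the short exact sequence $0\to R\xrightarrow{x}R\to R/xR\to 0$ yields a long exact sequence on local cohomology in which the Frobenius action on $H^j_\mfm(R/xR)$ is dominated by those on $H^{j-1}_\mfm(R)$ and $H^j_\mfm(R)$. Weak $F$-nilpotence of $R$ transfers (with controlled HSL bounds) to $R/xR$, permitting the inductive hypothesis on the $(n-1)$-dimensional quotient.

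The binomial coefficients $\binom{n}{j}$ arise from exactly this redistribution: each dimension-reduction step splits a contribution from $\hsl(H^j_\mfm(R))$ across two neighboring cohomology modules of the quotient, so iterating $n$ times yields a Pascal's-triangle pattern. Because weak $F$-nilpotence forces $H^j_\mfm(R)=0^F_{H^j_\mfm(R)}$ for all $j<n$, no HSL contributions appear beyond those that are explicitly listed in the bound.

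The hard part will be establishing the propagation inequality $\hsl(H^j_\mfm(R/xR))\le \hsl(H^{j-1}_\mfm(R))+\hsl(H^j_\mfm(R))$ that drives the induction. This requires chasing Frobenius nilpotence through the long exact sequence of local cohomology, using that $x$ acts injectively on all the relevant modules (courtesy of filter-regularity) and invoking the Nagel-Schenzel isomorphism mentioned earlier in the paper. Equally delicate is the passage between cohomological and ideal-theoretic data -- converting an annihilation $F^E([u])=0$ in $H^n_\mfq(R)$ back to the statement $u^{p^E}\in \mfq^{\fbp{p^E}}$ -- which forces a compatibility check in the directed system defining $H^n_\mfq(R)$ and is the main technical bottleneck.
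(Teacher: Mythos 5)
First, a point of orientation: the paper does not prove this statement at all---it is quoted from Quy's article \cite{Q} and used as a black box---so your proposal can only be measured against Quy's published argument. Your outline is in the same spirit as that argument (induction on dimension via filter-regular parameters, the Nagel--Schenzel isomorphism, and a Pascal-type recursion producing the binomial coefficients), but as written it is a plan with genuine gaps rather than a proof: the two steps you yourself flag as ``the hard part'' and ``the main technical bottleneck'' are precisely the content of \cite{Q}, and one of them is stated incorrectly.

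Concretely: (1) your propagation inequality has the wrong indices. From the long exact sequence attached to $0\to R\xrightarrow{x}R\to R/xR\to 0$, the module $H^j_\mfm(R/xR)$ is an extension of a submodule of $H^{j+1}_\mfm(R)$ by a quotient of $H^j_\mfm(R)$, so the inequality one can hope for is $\hsl(H^j_\mfm(R/xR))\le \hsl(H^j_\mfm(R))+\hsl(H^{j+1}_\mfm(R))$; with your version (involving $H^{j-1}_\mfm(R)$ and $H^j_\mfm(R)$) the Pascal bookkeeping produces $\sum_j\binom{n}{j+1}\hsl(H^j_\mfm(R))$, which gives the top cohomology module coefficient zero and is already false in the Cohen--Macaulay case. (2) Even with the correct indices, the square you need does not commute: Frobenius is $p$-linear, so $F\circ x=x^p\circ F\neq x\circ F$, and multiplication by $x$ is not a morphism of modules with Frobenius action. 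One must twist (e.g.\ compare $F^e$ with $x^{p^e-1}F^e$, or work with $R/(0:_Rx)$), and one must separately show that weak $F$-nilpotence transfers to $R/xR$ for a filter-regular parameter with controlled HSL numbers---both are substantive lemmas, not formalities, and you assert them without argument. (3) The passage from $F^E([u])=0$ in $H^n_\mfq(R)$ back to $u^{p^E}\in\mfq^{\fbp{p^E}}$ is exactly where the difficulty lives when $R$ is not Cohen--Macaulay: the maps in the directed system $\varinjlim_t R/(x_1^t,\ldots,x_n^t)$ are then not injective, the kernel of $R/\mfq\to H^n_\mfq(R)$ is the limit closure $\mfq^{\mathrm{lim}}/\mfq$ rather than zero, and controlling $\mfq^{\mathrm{lim}}$ by the lower local cohomology (via the filter-regular and Nagel--Schenzel machinery) is the heart of Quy's proof. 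Naming this as a bottleneck without resolving it leaves the theorem unproved.
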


Quy's result translates to the graded local setting with minimal adjustment, so we can apply it here to $\mcp_{n,d,\mbm}$.

\begin{scor}\label{cor: fte for pinched veroneses}
The pinched Veronese ring $P=\mcp_{n,d,\mathbf{m}}$ has an upper bound on its Frobenius test exponents given below. \begin{itemize}
    \item In the cases that $\max(\mbm)=d$; $n=d=2$ and $\max(\mbm)=1$; or $n=3$, $d=2$, $\max(\mbm)=1$ and $p> 2$, then $\fte(P)=0$, i.e., every parameter ideal of $P$ is Frobenius closed.
    \item If $d>2$ and $\max(\mbm)=d-1$, then $\fte(P)\le \binom{n}{2}$. In particular, when $n=2$, $\fte(P)=1$.
    \item If $d=2$, $n\ge 3$, $\max(\mbm)=1$, and $p=2$, then $\fte(P) \le \binom{n}{3}$. In particular, when $n=3$, $\fte(P)=1$.
    \item If $\max(\mbm)<d-1$, then $\fte(P)\le n$.
\end{itemize}
\end{scor}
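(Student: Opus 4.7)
The plan is to combine Quy's theorem with \Cref{thm: hsl numbers for pinched veronese in 2 vars}, refining the latter slightly to pin down not just that $\hsl(P)\le 1$ but also \emph{which} cohomological degree carries the nonzero HSL number. In every case of the corollary, $P = \mcp_{n,d,\mbm}$ is weakly $F$-nilpotent (either trivially, being Cohen--Macaulay in the $\max(\mbm)=d$ or $n=d=2$ cases, or via the $F$-nilpotent classification in \Cref{thm B} and \Cref{thm: omitted example is F-pure}), so Quy's bound $\fte(P)\le \sum_{j=0}^{n} \binom{n}{j}\hsl(H^j_\mfm(P))$ is available throughout.

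The three cases in the first bullet are precisely those in which $P$ is $F$-injective: $F$-regularity when $\max(\mbm)=d$, regularity when $n=d=2$ and $\max(\mbm)=1$, and $F$-purity from \Cref{thm: omitted example is F-pure} when $n=3$, $d=2$, $\max(\mbm)=1$, $p>2$. In each, every $\hsl(H^j_\mfm(P))$ vanishes and Quy's bound immediately yields $\fte(P)=0$.

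For the remaining three bullets, I would extract from the proof of \Cref{thm: hsl numbers for pinched veronese in 2 vars} that exactly one HSL number is nonzero and equal to $1$, located at $j=\depth(P)$. Indeed, $V$ is Cohen--Macaulay and $C$ is Cohen--Macaulay as a $P$-module (as noted in the proofs of \Cref{lem: depth of pinched veroneses} and \Cref{thm: hsl numbers for pinched veronese in 2 vars}), so the long exact sequence forces $H^j_\mfm(P) = 0$ outside $j=\depth(P)$ and $j=n$. At $j = \depth(P)$ the identification $H^j_\mfm(P) \simeq H^{j-1}_\mfm(C)$ combined with $\overline F(C) = 0$ yields $\hsl(H^j_\mfm(P)) = 1$. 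At $j=n$, either $H^n_\mfm(P) \simeq H^n_\mfm(V)$ when $P$ is not Cohen--Macaulay (giving $\hsl = 0$ since $V$ is $F$-regular), or the one-step argument in the proof of \Cref{thm: hsl numbers for pinched veronese in 2 vars} shows $\hsl(H^n_\mfm(P)) \le 1$ when $P$ is Cohen--Macaulay. Feeding this into Quy's bound, together with the values of $\depth(P)$ in \Cref{lem: depth of pinched veroneses}, produces the claimed bounds $\binom{n}{2}$, $\binom{n}{3}$, and $\binom{n}{1}=n$ in the three remaining bullets respectively.

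For the two sharp equalities (when $n=2$ with $\max(\mbm)=d-1$, $d>2$; and when $n=3$ with $d=2$, $\max(\mbm)=1$, $p=2$), the ring $P$ is Cohen--Macaulay by \Cref{thm A}, so the Katzman--Sharp theorem \cite[Theorem~2.4]{KS} gives $\fte(P) = \hsl(H^n_\mfm(P))$, bounded above by $1$ via \Cref{thm: hsl numbers for pinched veronese in 2 vars}. For the matching lower bound, I would argue that $P$ is $F$-nilpotent by \Cref{thm B} but not normal (since $\max(\mbm)<d$), hence not $F$-rational; since an $F$-nilpotent ring is $F$-injective if and only if it is $F$-rational, $P$ cannot be $F$-injective, forcing $\hsl(H^n_\mfm(P))=1$ and $\fte(P)=1$. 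The main subtlety lies in the third paragraph: the calculations are not difficult, but verifying that exactly one HSL number is nonzero (rather than possibly two, one each at $\depth(P)$ and at $n$) requires careful bookkeeping of the depth of $C$ and of the induced Frobenius action in the long exact sequence.
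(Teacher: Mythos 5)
Your proposal is correct and follows essentially the same route as the paper: Quy's bound applied with the HSL computations of \Cref{thm: hsl numbers for pinched veronese in 2 vars} and the depths from \Cref{lem: depth of pinched veroneses}, plus Katzman--Sharp for the Cohen--Macaulay equalities. The only minor differences are cosmetic: the paper cites Quy--Shimomoto directly for the $F$-injective Cohen--Macaulay cases rather than running Quy's inequality with all HSL numbers zero, and it leaves implicit the non-$F$-injectivity (your $F$-nilpotent-but-not-normal argument) that you spell out to get the sharp value $\fte(P)=1$.
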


\begin{proof}
We handle each case in turn. In the first case, no matter the conditions on $n,d,$ and $\mbm$, we have that $P$ is Cohen-Macaulay and $F$-injective, so by Quy-Shimomoto \cite[Corollary~3.9]{QS}, $\fte(P)=0$. 

Now we let $\mfm$ be the homogeneous maximal ideal of $P$. If $d>2$ and $\max(\mbm)=d-1$, we can apply Quy's upper bound to get: \[
\fte(P) \le \sum_{j=0}^n \binom{n}{j} \hsl(H^j_\mfm(P)) = \binom{n}{2} \hsl(H^2_\mfm(P)) = \binom{n}{2}
\] and if $n=2$, then $\fte(P)\le 1$. But in this case, $P$ is Cohen-Macaulay, and Katzman-Sharp show that $\fte(P)=\hsl(P)$ when $P$ is Cohen-Macaulay as mentioned above.

If $d=2$, $\max(\mbm)=1$, and $p=2$, the situation is very similar to the $\max(\mbm)=d-1$ case. When $n=3$, $P$ is Cohen-Macaulay and $\fte(P)=\hsl(P)=1$, and when $n>3$ then $\fte(P) \le \binom{n}{3} \hsl(H^3_\mfm(P)) + \binom{n}{n}\hsl(H^n_\mfm(P)) = \binom{n}{3}$. 

Finally, if $\max(\mbm)<d-1$, then Quy's upper bound gives: \[
\fte(P) \le \sum_{j=0}^n \binom{n}{j} \hsl(H^j_\mfm(P)) = \binom{n}{1} \hsl(H^1_\mfm(P)) = n,
\] as required.
\end{proof}

\begin{srmk}
Notably missing from the list above is the case that $n>3$, $d=2$, $\max(\mbm)=1$, and $p>2$. In this case, $P=\mcp_{n,2,\mbm}$ is $F$-injective but not $F$-nilpotent or Cohen-Macaulay, so none of the techniques used in \Cref{cor: fte for pinched veroneses} apply. The authors currently do not know if these examples have finite Frobenius test exponents. 
\end{srmk}

\section{Multi-pinched Veronese rings}

Throughout this section, let $T=T_{n,d}$ be as before with $d>2$ and $n \ge 2$. Most of the results in this paper so far have focused semigroups generated by removing only a single vector of $T$. However, we show that even if we remove a larger subset of $T$, we can still control the set difference between $\mca_{n,d}$ and the corresponding semigroup as long as we do not remove any vector $\mbe$ of $T$ with $\max(\mbe)\ge d-1$. The authors are grateful to Mark Denker for his combinatorial insight and inspiration for the lemma and proof below.

\begin{slem}\label{lem: combinatorial cokernel multipinch case}
We let $S\subset T$ be the set of vectors in $\mbn^n$ below. \[ S = \{ \mbm \in T \mid \max(\mbm)\ge d-1\}\] Let $A$ be the semigroup generated by $S$. Then, $\mca_{n,d}\setminus A$ is a finite set. In particular, if $\mathbf{e} \in \mca_{n,d}$ and $\max(\mathbf{e}) \ge (n-1)(d^2-d)$, then $\mathbf{e} \in A$. 
\end{slem}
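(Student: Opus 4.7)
The plan is to produce an explicit nonnegative-integer decomposition of $\mathbf{e}$ as a combination of the generators of $A$. Writing $\mathbf{u}_i$ for the $i$-th standard basis vector of $\mathbb{N}^n$, the set $S$ consists precisely of the ``pure'' generators $d\mathbf{u}_i$ (for $1 \le i \le n$) together with the ``hybrid'' generators $(d-1)\mathbf{u}_i + \mathbf{u}_j$ (for $i \ne j$). After permuting coordinates, I may assume $e_1 = \max(\mathbf{e}) \ge (n-1)(d^2-d)$. The idea is to handle each coordinate $i \ge 2$ separately using hybrid generators that dump their excess weight into coordinate $1$, and then absorb whatever remains in coordinate $1$ via copies of $d\mathbf{u}_1$.

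Concretely, for each $i \ge 2$ write $e_i = dq_i + r_i$ with $0 \le r_i \le d-1$, and use $q_i$ copies of $d\mathbf{u}_i$ together with $r_i$ copies of $(d-1)\mathbf{u}_1 + \mathbf{u}_i$; this contributes exactly $e_i$ to coordinate $i$ and adds $(d-1)r_i$ to coordinate $1$. Finally, take $\alpha_1$ copies of $d\mathbf{u}_1$, where
\[
\alpha_1 \, d \;=\; e_1 - (d-1) \sum_{i=2}^{n} r_i.
\]
Two things must be checked. First, divisibility: $\sum_{i=1}^n e_i \equiv 0 \pmod{d}$ forces $\sum_{i=1}^n r_i \equiv 0 \pmod{d}$, and since $d-1 \equiv -1 \pmod{d}$, the right-hand side is congruent to $r_1 + \sum_{i \ge 2} r_i \equiv 0 \pmod{d}$. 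Second, nonnegativity of $\alpha_1$: since each $r_i \le d-1$, we have $(d-1) \sum_{i \ge 2} r_i \le (n-1)(d-1)^2$, and since $(d-1)^2 \le d(d-1) = d^2-d$, the hypothesis $e_1 \ge (n-1)(d^2-d)$ yields $\alpha_1 \ge 0$. This exhibits $\mathbf{e}$ as a nonnegative integer combination of elements of $S$, so $\mathbf{e} \in A$.

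For the first assertion of the lemma, if $\mathbf{e} \in \mathcal{A}_{n,d} \setminus A$ then by contraposition $\max(\mathbf{e}) < (n-1)(d^2-d)$, so every coordinate of $\mathbf{e}$ is bounded by this constant, leaving only finitely many possibilities. I do not expect a serious obstacle in carrying out this plan; the only subtlety is committing to the asymmetric choice of using only the hybrid generators $(d-1)\mathbf{u}_1 + \mathbf{u}_i$ (and never their reverses $(d-1)\mathbf{u}_i + \mathbf{u}_1$), which keeps both the divisibility congruence and the nonnegativity bound completely transparent.
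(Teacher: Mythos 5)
Your proposal is correct and is essentially the paper's own argument: the paper likewise reduces each coordinate $e_i$ ($i\ge 2$) modulo $d$, absorbs the quotients with the generators $d\mathbf{u}_i$, uses the remainders as coefficients on the hybrid generators $(d-1)\mathbf{u}_1+\mathbf{u}_i$, and absorbs the leftover in the first coordinate with multiples of $d\mathbf{u}_1$, with the same divisibility and nonnegativity checks (your bound $(d-1)^2\le d^2-d$ matching the paper's replacement of each remainder by $d$) and the same symmetry/boundedness argument for finiteness.
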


\begin{proof}
We will show that if $t\ge (n-1)(d^2-d)$, then $\mbe = (t,e_2,\ldots,e_n)$ is in $A$, which will imply the result since the argument is symmetric in each coordinate. Using the division algorithm, write $e_i = dp_i+q_i$. We can re-write $\mbe = d(0,p_2,\ldots,p_n)+(t,q_2,\ldots,q_n)$ with $d(0,p_2,\ldots,p_n) \in A$. Then, for $2 \le i \le n$, we let $\mathbf{a}_i=(a_1,\ldots,a_n)$ where $a_1=d-1$ and $a_j=1$ if $j=i$ or $a_j=0$ otherwise, notably each $\mathbf{a}_i \in A$. We can then then re-write $(t,q_2,\ldots,q_n)$: \begin{align*}
    (t,q_2, \ldots,q_n)&=\left(t-\sum_{i=2}^n (d-1)q_i + \sum_{i=2}^n(d-1)q_i,q_2,\ldots,q_n \right)\\ 
    &= \left(t-\sum_{i=2}^n (d-1)q_i,0,\ldots,0 \right) +q_2\mathbf{a}_2+\cdots+q_n\mathbf{a}_n 
\end{align*} Furthermore, $t-\sum_{i=2}^n (d-1)q_i>0$ since $t$ was assumed to be larger than $(n-1)(d^2-d)$, and we may replace each $q_i$ with the upper bound $d$.

Finally, a direct computation shows that $t-\sum_{i=2}^n (d-1)q_i$ is a multiple of $d$. Consequently, $\left( t-\sum_{i=2}^n (d-1)q_i,0,\ldots,0 \right)\in A$, and thus $\mbe \in A$.
\end{proof}

\begin{srmk}\label{rmk: any multipinch has finite combinatorial cokernel}
In fact, the proof above shows that if $B$ is a semigroup generated by any set $U$ with $S\subset U \subset T$, then $\mca_{n,d}\setminus B$ is also a finite set.\end{srmk}

We now fix $d>2$, $n \ge 2$, and a semigroup $B$ generated as in the remark above. The homological properties of the affine semigroup ring $k[B]$ are similar to the $\max(\mbm)<d-1$ case for single pinches, as the cokernel $k[B]\rightarrow V_{n,d}$ is still finite-dimensional over $k$ and hence finite length as a $k[B]$-module.

\begin{sthm}\label{thm: multipinch is not CM}
Let $B$ be as in the paragraph above. Then, $\depth(k[B]) = 1$ and so $k[B]$ is not Cohen-Macaulay.
\end{sthm}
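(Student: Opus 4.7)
The plan is to mirror the argument from case (4) of \Cref{lem: depth of pinched veroneses}. Specifically, I would establish three things: (a) $V = V_{n,d}$ is the normalization of $k[B]$; (b) the cokernel $C$ of the natural inclusion $k[B] \hookrightarrow V$ has finite length as a $k[B]$-module; and (c) the local cohomology long exact sequence then pins the depth to exactly $1$.

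For (a), I would first show the fraction fields agree by the same reasoning as \Cref{lem:fraction field of pinched veroneses}: since $S \subset B$, every monomial $x_i x_n^{d-1}$ and $x_n^d$ (all of which have $\max \ge d-1$) lies in $k[B]$, so $x_i/x_n \in \Frac(k[B])$ for each $i$, and a dimension count finishes the argument. For integrality of $V$ over $k[B]$, I would use \Cref{rmk: any multipinch has finite combinatorial cokernel} to note that $\mca_{n,d}\setminus B$ is finite; consequently, for any $\mba \in \mca_{n,d}$ there exists $N \in \mbn$ with $N\mba \in B$, giving a monic polynomial relation satisfied by $x^{\mba}$ over $k[B]$. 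Since $V$ is normal, it is then the normalization of $k[B]$.

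For (b), the cokernel $C$ is spanned as a $k$-vector space precisely by the monomials $x^{\mathbf{e}} + k[B]$ for $\mathbf{e} \in \mca_{n,d}\setminus B$, a finite set by the same remark. Hence $C$ has finite length as a $k[B]$-module, so letting $\mfn$ denote the homogeneous maximal ideal of $k[B]$, one has $H^0_\mfn(C) = C$ and $H^j_\mfn(C) = 0$ for all $j \ge 1$.

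For (c), the multi-pinched setting implicitly requires $U \subsetneq T$ (otherwise $k[B] = V$ would already be Cohen-Macaulay), so $C \ne 0$. Applying $H^*_\mfn(-)$ to the short exact sequence $0 \to k[B] \to V \to C \to 0$ and using that $V$ is Cohen-Macaulay of dimension $n \ge 2$ (so $H^j_\mfn(V) = 0$ for $j < n$), the initial segment of the long exact sequence reduces to
\[ 0 \to H^0_\mfn(k[B]) \to 0 \to C \to H^1_\mfn(k[B]) \to 0, \]
yielding $H^1_\mfn(k[B]) \cong C \ne 0$. Therefore $\depth(k[B]) = 1 < n = \dim(k[B])$, so $k[B]$ is not Cohen-Macaulay. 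The only subtle step is verifying (a) cleanly; once normalization is identified, (b) and (c) are immediate analogues of the single-pinch argument, and the main technical input (finiteness of $\mca_{n,d}\setminus B$) is already in hand from \Cref{lem: combinatorial cokernel multipinch case}.
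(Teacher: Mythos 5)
Your proposal is correct and is essentially the paper's own argument: the finiteness of $\mca_{n,d}\setminus B$ makes the cokernel $C$ of $k[B]\hookrightarrow V_{n,d}$ a nonzero module of dimension zero over $k[B]$, and the local cohomology long exact sequence (using that $V_{n,d}$ is Cohen--Macaulay and $\sqrt{\mfm V_{n,d}}$ is the maximal ideal of $V_{n,d}$) yields $H^1_{\mfm}(k[B])\simeq C\neq 0$, while $\dim k[B]=n$ gives the failure of Cohen--Macaulayness. The only difference is your step (a): identifying $V_{n,d}$ as the normalization of $k[B]$ is harmless but unnecessary, since the argument only needs the inclusion $k[B]\subset V_{n,d}$ together with $x_1^d,\ldots,x_n^d\in k[B]$ to see that the two rings have the same dimension $n$.
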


\begin{proof}
As in the $d>2$, $\max(\mbm)<d-1$ case of \Cref{lem: depth of pinched veroneses}, $\mca_{n,d}\setminus B$ is finite so the cokernel $C$ of $k[B]\rightarrow V_{n,d}$ is dimension zero as a $k[B]$-module. So, letting $\mfm$ be the homogeneous maximal ideal of $k[B]$, we get $0 \neq H^0_\mfm(C)=C\simeq H^1_\mfm(k[B])$, which shows that $\depth(k[B])=1$. Since $k[B]$ still contains the system of parameters $x_1^d,\ldots,x_n^d$ of $V_{n,d}$, we also know $\dim(k[B])=n$ which shows that $k[B]$ is not Cohen-Macaulay.  
\end{proof}

We now show these rings $k[B]$ are $F$-nilpotent.

\begin{sthm}\label{thm: pinching all but rook and knight moves is F-nilpotent}
Let $B$ be as established above. Then, the affine semigroup ring $k[B]$ is $F$-nilpotent.
\end{sthm}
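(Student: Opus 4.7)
The plan is to directly invoke \Cref{lem: nilpotent cokernel F-rational} applied to the natural inclusion $k[B]\hookrightarrow V_{n,d}$. Since $V_{n,d}$ is a direct summand of the polynomial ring $k[x_1,\ldots,x_n]$, it is $F$-regular and in particular $F$-rational, so the target hypothesis of that lemma is automatic. Moreover, $k[B]$ and $V_{n,d}$ are both standard graded with the same degree zero piece $k$, and since $B\subset \mca_{n,d}$, the natural map is an inclusion of graded rings with matching homogeneous maximal ideals.

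The first real step is to verify that the cokernel $C$ of $k[B]\hookrightarrow V_{n,d}$ has dimension zero as a $k[B]$-module and is concentrated in positive degrees. This follows immediately from \Cref{rmk: any multipinch has finite combinatorial cokernel}: since $\mca_{n,d}\setminus B$ is finite, $C$ is a finite-dimensional $k$-vector space spanned by the monomials $x_1^{e_1}\cdots x_n^{e_n}+k[B]$ with $(e_1,\ldots,e_n)\in \mca_{n,d}\setminus B$, all of which have strictly positive total degree since $B$ contains every degree zero element of $\mca_{n,d}$ (namely the origin, trivially).

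Next, I would verify that the induced graded Frobenius action $\overline{F}\colon C\to C$ is nilpotent. Let $N$ be the maximum degree of a homogeneous element of $C$, which exists by finiteness. The map $\overline{F}$ multiplies degrees by $p$, and iterating $e$ times sends a class of degree $d'\ge 1$ into degree $d'p^e$. Choosing $e$ with $p^e>N$ forces $\overline{F}^e$ to land above degree $N$, hence $\overline{F}^e=0$ on $C$.

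With these two observations in hand, the hypotheses of \Cref{lem: nilpotent cokernel F-rational} are satisfied, and we conclude that $k[B]$ is $F$-nilpotent. I do not anticipate any serious obstacle here: the combinatorial work was carried out in \Cref{lem: combinatorial cokernel multipinch case}, and the present statement is essentially a packaging of that finiteness with the general cokernel criterion. The only small thing to double-check is that the argument does not depend on $B$ being generated by precisely $S$; but as noted in \Cref{rmk: any multipinch has finite combinatorial cokernel}, the same proof handles any $U$ with $S\subset U\subset T$, so the statement holds in the generality claimed.
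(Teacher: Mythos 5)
Your proposal is correct and follows essentially the same route as the paper: both reduce the statement to \Cref{lem: nilpotent cokernel F-rational} by noting that $V_{n,d}$ is $F$-rational and that the cokernel of $k[B]\hookrightarrow V_{n,d}$ is concentrated in finitely many positive degrees (via \Cref{lem: combinatorial cokernel multipinch case} and \Cref{rmk: any multipinch has finite combinatorial cokernel}), hence nilpotent under the induced Frobenius action. Your explicit degree-counting argument for nilpotence of $\overline{F}$ on $C$ is just a spelled-out version of the observation the paper makes implicitly.
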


\begin{proof}
Write $R=k[B]$, $V=V_{n,d}=k[\mca_{n,d}]$, and $\mfm$ and $\mfn$ for the homogeneous maximal ideals of $R$ and $V$ respectively. Then, $\sqrt{\mfm V}= \mfn$ since $x_i^d \in R$ for each $i$. Furthermore, the cokernel of $R\rightarrow V$ is concentrated in finitely many strictly positive degrees since $\mca_{n,d}\setminus B \subset \mca_{n,d}\setminus A$, and is hence nilpotent under the induced Frobenius action. Thus we can apply \Cref{lem: nilpotent cokernel F-rational} to conclude that $R$ is $F$-nilpotent, as required.  
\end{proof}

We are now ready to compute an upper bound on the Frobenius test exponents for these multi-pinched Veronese rings.

\begin{sthm}\label{thm: fte bounds for multipinches}
Let $B$ be a semigroup as defined in \Cref{thm: pinching all but rook and knight moves is F-nilpotent}, with $B\subset \mca_{n,d}$ for $n>2$. Then, $\fte(k[B])\le n\lceil \log_p((n-1)(d^2-d))\rceil$.
\end{sthm}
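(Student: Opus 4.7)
The plan is to apply Quy's inequality $\fte(R)\le \sum_{j=0}^n \binom{n}{j}\hsl(H^j_\mfm(R))$ to $R=k[B]$, which is available because \Cref{thm: pinching all but rook and knight moves is F-nilpotent} shows $R$ is $F$-nilpotent (hence weakly $F$-nilpotent). The problem then reduces to pinpointing which of the $H^j_\mfm(R)$ are nonzero and bounding their Hartshorne-Speiser-Lyubeznik numbers.

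First, I would compute the local cohomology of $R$. Write $V=V_{n,d}$ and let $C$ denote the cokernel of the inclusion $R\hookrightarrow V$. By \Cref{rmk: any multipinch has finite combinatorial cokernel}, $\mca_{n,d}\setminus B$ is finite, so $C$ has Krull dimension zero as an $R$-module; in particular $H^0_\mfm(C)=C$ and $H^j_\mfm(C)=0$ for $j>0$. Feeding the short exact sequence $0\to R\to V\to C\to 0$ into the long exact sequence of local cohomology and using that $V$ is Cohen-Macaulay of dimension $n\ge 3$, I extract Frobenius-equivariant identifications $H^1_\mfm(R)\simeq C$ and $H^n_\mfm(R)\simeq H^n_\mfm(V)$, with all intermediate cohomology vanishing. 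Since $V$ is $F$-regular (hence $F$-injective), $\hsl(H^n_\mfm(R))=0$, and Quy's bound collapses to $\fte(R)\le n\cdot \hsl(C)$.

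The main step, and the only genuine obstacle, is to show that $\hsl(C)\le \lceil \log_p((n-1)(d^2-d))\rceil$. Here I would observe that every nonzero homogeneous class in $C$ is a $k$-linear combination of monomial classes $x^{\mathbf{a}}+R$ with $\mathbf{a}\in \mca_{n,d}\setminus B$, and that the induced Frobenius sends such a class to $x^{p^e \mathbf{a}}+R$. Any such $\mathbf{a}$ has at least one positive coordinate (else $x^{\mathbf{a}}=1\in R$), so $\max(p^e \mathbf{a})\ge p^e$. Choosing $e=\lceil \log_p((n-1)(d^2-d))\rceil$ forces $\max(p^e \mathbf{a})\ge (n-1)(d^2-d)$, and \Cref{lem: combinatorial cokernel multipinch case} then places $p^e \mathbf{a}$ in $A\subset B$, killing the class in $C$.

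Assembling the two paragraphs delivers
\[
\fte(R)\le \binom{n}{1}\hsl(H^1_\mfm(R)) + \binom{n}{n}\hsl(H^n_\mfm(R)) \le n\lceil \log_p((n-1)(d^2-d))\rceil,
\]
as desired. I do not expect any subtle issue beyond the exponent-chase in the third paragraph, which is essentially forced by the combinatorial lemma and the fact that the Frobenius action on $C$ multiplies multidegrees by $p$.
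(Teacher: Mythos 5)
Your proposal is correct and follows essentially the same route as the paper: Quy's bound applied to $k[B]$ (weakly $F$-nilpotent by \Cref{thm: pinching all but rook and knight moves is F-nilpotent}), the identifications $H^1_\mfm(k[B])\simeq C$ and $H^n_\mfm(k[B])\simeq H^n_\mfm(V_{n,d})$ coming from the finite cokernel, and the bound $\hsl(C)\le \lceil\log_p((n-1)(d^2-d))\rceil$ via \Cref{lem: combinatorial cokernel multipinch case}. Your explicit exponent chase in the third paragraph just spells out the step the paper leaves implicit.
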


\begin{proof}
Since $\mca_{n,d}\setminus B\subset \mca_{n,d}\setminus A$ is finite, we have the cokernel $C$ of $k[B]\rightarrow V_{n,d}$ is a finite-dimensional $k$-vector space and is thus dimension $0$ as a $k[B]$-module. We can then completely analyze the local cohomology of $k[B]$; for simplicity we let $R=k[B]$, $V=V_{n,d}$, and $\mfm$ and $\mfn$ be the homogeneous maximal ideals of $R$ and $V$ respectively. Then, $\sqrt{\mfm V} = \mfn$, and we get that $H^0_\mfm(C)=C \simeq H^1_\mfm(R)$, $H^j_\mfm(R)=H^j_\mfm(V) = 0$ for $1<j< n$, and $H^n_\mfm(R)\simeq H^n_\mfm(V)$.

Then, similarly to the proof of \Cref{thm: hsl numbers for pinched veronese in 2 vars}, $\hsl(H^j_\mfm(R)) = 0$ for $j \neq 1$ and $\hsl(H^1_\mfm(R))$ is the minimum $e \in \mbn$ so that $\overline{F}^e(C)=0$, which we can see is bounded above by $\lceil\log_p((n-1)(d^2-d))\rceil$ by \Cref{lem: combinatorial cokernel multipinch case}. Thus, in Quy's upper bound for Frobenius test exponents, we get $\fte(R)\le \binom{n}{1} \hsl(H^1_\mfm(R)) \le n\lceil\log_p((n-1)(d^2-d))\rceil$. 
\end{proof}

This provides a coarse upper bound independent of which particular semigroup $B$ is chosen. If $|\mbm|=d$ and $\max(\mbm)<d-1$, then $\mca_{2,d,\mbm}$ is a semigroup for which \Cref{thm: fte bounds for multipinches} applies, but we have shown that $\fte(\mcp_{2,d,\mathbf{m}})\le 2$, which is much sharper than the bound of $2 \lceil \log_p(d^2-d)\rceil$ provided above.

We conclude the paper with some further questions.

\begin{question}
We have shown in \Cref{thm: multipinch is not CM} that pinching any number of generators $x_1^{m_1}\cdots x_n^{m_n}$ of $V_{n,d}$ where $\max(m_1,\ldots,m_n)<d-1$ provides a non Cohen-Macaulay affine semigroup ring. Can we determine all subsets $S$ of $T_{n,d}$ which generate semigroups $A$ such that $k[A]$ is Cohen-Macaulay?
\end{question}

\begin{question}
We have shown in \Cref{thm: omitted example is F-pure} that the ring $\mcp_{3,2,(1,1,0)}$ is $F$-pure when $p>2$, but it seems likely that $\mcp_{n,2,\mbm}$ for $\max(\mbm)=1$ is also $F$-pure when $p>2$. However, since $\mcp_{n,2,\mbm}$ is not even Cohen-Macaulay for $n>3$, the same technique we used to show $\mcp_{3,2,\mbm}$ is $F$-pure will not apply. Can we show that $\mcp_{n,2,\mbm}$ is always $F$-pure when $p>2$ and $\max(\mbm)=1$?   
\end{question}

\begin{question}
Hochster's theorem establishes normality as a sufficient intrinsic condition on an affine semigroup $A$ so that $k[A]$ is Cohen-Macaulay, and in prime characteristic $p>0$, normality further implies that $k[A]$ is $F$-regular, a restrictive $F$-singularity type. 

In the $\max(\mbm)<d$ case, the proofs we provide of $F$-nilpotence and $F$-injectivity depend on extrinsic qualities, i.e. the embedding $\mca_{n,d,\mbm}\subset \mca_{n,d}$. Can we determine intrinsic conditions on an affine semigroup so its corresponding affine semigroup ring has a certain classes of $F$-singularity?
\end{question}

\bibliographystyle{alpha}
\bibliography{References}
\end{document}